\documentclass[reqno]{amsart}
\usepackage{amssymb}
\usepackage{amsthm}
\usepackage{amsmath}
  \usepackage{paralist}
  \usepackage{graphics,color}
  \usepackage{epsfig}
\usepackage[english]{babel}

  \textheight=8.2 true in
   \textwidth=5.0 true in
    \topmargin 30pt
     \setcounter{page}{1}

\newtheorem{theorem}{Theorem}[section]

\newtheorem{lemma}[theorem]{Lemma}

\newtheorem{hyp}[theorem]{Hypotheses}

\theoremstyle{definition}

\newtheorem{remark}[theorem]{Remark}

\newcommand{\rl}{{{\rm I} \kern -.15em {\rm R} }}


\newcommand{\rsp}{{\mathbb R}}

\newcommand{\ds}{\displaystyle}
\newcommand{\no}{\nonumber}

\newcommand{\beq}{\begin{equation}}
\newcommand{\eeq}{\end{equation}}
\newcommand{\beqn}{\begin{eqnarray}}
\newcommand{\eeqn}{\end{eqnarray}}
\newcommand{\bear}{\begin{array}}
\newcommand{\eear}{\end{array}}
\newcommand{\beit}{\begin{itemize}}
\newcommand{\eeit}{\end{itemize}}
\newcommand{\beqno}{\begin{eqnarray*}}
\newcommand{\eeqno}{\end{eqnarray*}}

\newcommand{\ve}{\varepsilon}
\newcommand{\g}{\gamma}
\renewcommand{\l}{\lambda}
\renewcommand{\phi}{\varphi}
\newcommand{\dl}{\delta}
\newcommand{\s}{\sigma}

\newcommand{\G}{\Gamma}

\newcommand{\om}{\omega}
\newcommand{\Om}{\Omega}
\renewcommand{\a}{\alpha}

\newcommand{\sub}{\subset}
\newcommand{\ov}{\overline}
\newcommand{\what}{\widehat}

\newcommand{\wtil}{\widetilde}
\newcommand{\pn}{\par \noindent}
\newcommand{\med}{\medskip}
\newcommand{\qq}{\qquad}
\newcommand{\q}{\quad}

\newcommand{\media}[1]
{\kern 0.4ex {-} \kern -2.0 ex {\int}_{\kern -0.9 ex {#1}}\;}

\allowdisplaybreaks

\begin{document}

\title[A severely ill-posed problem]{Continuous dependence and
uniqueness\\
for lateral Cauchy problems \\
for linear\\ integro-differential parabolic equations\\
}

\author[A. Lorenzi, L. Lorenzi and M. Yamamoto]
{Alfredo Lorenzi, Luca Lorenzi
and Masahiro Yamamoto}

\address{L.L.: Dipartimento di Matematica e Informatica, Universit\`a degli Studi
di Parma,  Parco Area delle Scienze 53/A, I-43124 Parma, Italy.}
\email{luca.lorenzi@unipr.it}

\address{M.Y.: Department of Mathematical Sciences, The University of Tokyo,
Komaba, Meguro, Tokyo 153, Japan}
\email{myama@ms.u-tokyo.ac.jp}
\thanks{This paper was completed after the first author passed away.}

%

\begin{abstract}
Via Carleman estimates we prove uniqueness and continuous
dependence results for lateral Cauchy problems for linear integro-differential
parabolic equations without initial conditions. The additional information
supplied prescribes the conormal derivative of the temperature on a relatively open
subset of the lateral boundary of the space-time domain.
\end{abstract}

\subjclass[2010]{Primary: 35R30; Secondary 35K20, 45Q05.}

\keywords{Ill-posed problems, identification problems, linear parabolic
integro-differential equations, uniqueness, continuous dependence results.}

\maketitle

\section{Introduction}
\setcounter{equation}{0}
In this paper we consider the linear ill-posed integro-differential
parabolic problem with {\it no initial condition}
\begin{align}
\left\{
\begin{array}{ll}
D_tu(t,x)-A(x,D)u(t,x)
= {\mathcal B}u(t,x)+f_0(t,x), \q & (t,x)\in (0,T)\times \Om,\\[1mm]
u(t,x)=g(t,x), & (t,x)\in (0,T)\times \partial\Om,\\[1mm]
D_{\nu_A}u(t,x)=D_{\nu_A}g(t,x), & (t,x)\in (0,T)\times \G.
\end{array}
\right.
\label{1}
\end{align}
Here $\Om$ is a bounded connected open set in ${\mathbb R}^n$ whose boundary
$\partial \Om$ is of $C^2$-class, $\Gamma \subset \partial\Omega$
is a sub-domain of $\G$, i.e., a relatively open subset of $\partial\Omega$.
Moreover,
\begin{equation}
A(x,D)=\sum_{i,j=1}^nD_{x_i}(a_{i,j}(x)D_{x_j})+\sum_{j=1}^nb_j(x)D_{x_j}+a_0(x)
\label{oper-A}
\end{equation}
is an elliptic operator which generates an analytic semigroup $\{e^{tA}\}_{t\ge 0}$ in $L^2(\Om)$.
The operator ${\mathcal B}$ is defined by
\begin{align}
{\mathcal B}u(t,x)=& f_1(t,x)u(T_1,x)+f_2(t,x)u(T_2,x)+f_3(t,x)
\int_{T_1}^{T_2} \rho_1(\s,x)u(\s,x)\,d\s\no\\
&+ Bu(t,x) + f_4(t,x)\int_{T_1}^{T_2}\rho_2(\s,x)Bu(\s,x)\,d\s
=:\sum_{j=1}^5\,{\mathcal B}_ju(t,x),
\label{2.12}
\end{align}
where $0<T_1<T_2<T$ and
\beqn
\label{2.5}
Bu(t,x)=\int_\Om k(t,x,y)u(t,y)\,dy,
\eeqn
the kernel $k:(0,T)\times \Om\times \Om\to \rsp$ being a
measurable function.
The functions $f_0, f_1, f_2, f_3, f_4, \rho_1,\rho_2, k, g$ are suitably chosen
so as to satisfy Hypotheses \ref{hyp1} and \ref{hyp-1a} stated in Sections \ref{sect2} and \ref{sect3}.
Finally, $\nu_A$ denotes the conormal vector related to the operator $A(x,D)$, i.e., $(\nu_A(x))_i=
\sum_{j=1}^na_{i,j}(x)\nu_j(x)$ for any $i=1,\ldots,n$ and $x\in\partial\Omega$, where $\nu$ denotes the outward unit normal vector to $\partial\Omega$ at $x$.

We consider the inverse problem of determining $u$ by the knowledge of
$f_0$ and $g$.
Our main results are the uniqueness: $f_0=0$ and $g=0$ imply $u=0$ in $(0,T)
\times \Omega$  and the continuous dependence of
$u$ in terms of $(f_0,g)$.
Continuous dependence means here that $u$ is estimated in $C((0,T];L^2(\Om))\cap L^2_{\rm loc}((0,T];H^1(\Om))$ in terms
of the $H^1(0,T;L^2(\Omega))$-norm of $f_0$ and the $L^2(0,T;H^2(\Om))$-norm of $g$.

When the non-local term ${\mathcal B}$ is not included, that is, when
we have to deal with a differential problem,
we can apply the Carleman estimate in \cite{I} (see also \cite{FI}) and
prove the uniqueness and
continuous dependence. With the presence of ${\mathcal B}$, to the best
knowledge of the authors, results are not available in literature.

Our method is still based on the Carleman estimate in \cite{I}, but in order to
treat the non-local terms, we need strong conditions on the kernels
$\rho_1$, $\rho_2$ and $k$ in ${\mathcal B}$.

Finally, we stress that, due to the absence of initial conditions, our results can concern both forward and backward
parabolic problems.

Carleman estimates are a powerful tool in solving inverse problems. We
refer the readers to the pioneering work \cite{BK} and also to \cite{K} and the survey \cite{Y} related to parabolic inverse problems.
Concerning uniqueness and continuous dependence results for Cauchy problems with no initial conditions, we mention the papers \cite{LO,LL-1,LL-2}. More specifically, in \cite{LO}, $f_1=f_2=f_3=f_4=0$ and only the
Dirichlet boundary condition is prescribed on $\partial \Om$.
Two different additional conditions are assumed. In the first case, $u$ is assumed to be known in an open subdomain $\om$
with ${\ov \om}\sub \Om$, while in the latter the linear operator
${\mathcal B}$, {\it which transforms spatial arguments}, is defined by
\begin{eqnarray*}
{\mathcal B}u(t,x)=k_0(t,x)u(t,\sigma x)+\sum_{j=1}^n\,k_j(t,x)D_{x_j}
u(t,\rho x),
\end{eqnarray*}
for some $\sigma\in (0,1)$, where $\Om$ is convex with respect to $x=0$.
In \cite{LL-1} the case when the elliptic operator $A(\cdot,D)$ has smooth and unbounded coefficients in a cylinder of $\mathbb R^{m+n}$ and it degenerates on some directions
is considered and new Carleman estimates are proved. Finally, in \cite{LL-2} problem \eqref{1} is considered with Dirichlet boundary conditions on $\partial\Omega$
and first order additional conditions on a part of $\partial\Omega$. Also in this situation, new Carleman type estimates have been the key tool to prove the
uniqueness and continuous dependence results.

We conclude this introduction with giving the plan of the paper. In Section \ref{sect2} we
state the problems that we deal with in the paper and introduce the well-known Carleman estimates for
linear parabolic operators (e.g., \cite{FI}-\cite{IY}).
In Section \ref{sect3} we establish the uniqueness result (Theorem 3.1) for our problem
and prove it.  The proof is based on the Carleman estimate.
Finally, Section \ref{sect4} is devoted to deducing the continuous dependence result
in non-weighted $L^2$-spaces (Theorem \ref{teo-4.1}).

\subsection*{Notation}
Throughout the paper we set
$Q_{T_1,T_2} = (T_1,T_2) \times \Omega$ for any $T_1, T_2\in\mathbb R$ with $T_1<T_2$ and we simply
write $Q_T$ for $Q_{0,T}$.
\newpage

\section{Main assumptions and preliminary results}
\label{sect2}
\setcounter{equation}{0}
To begin with, let us introduce our standing assumptions. For this purpose, we introduce the function
$l:[0,T]\to\mathbb R$, defined by $l(t)=t(T-t)$ for any $t\in [0,T]$, and a
function $\psi\in C^2({\ov \Om})$ which satisfies
the following properties:
\begin{align*}
&\psi(x)>0,\ x\in \Om,\qq\;\, |\nabla\psi(x)|>0,\
x\in {\ov \Om},\\
&D_{\nu_A}\psi(x):=\sum_{i,j=1}^na_{i,j}(x)\nu_j(x)D_i\psi(x)\le 0,\
x\in \partial\Om\setminus \G.
\end{align*}
For the existence of such a function, we refer the reader to \cite{FI}.

\begin{hyp}
\label{hyp1}
\begin{enumerate}[\rm (i)]
\item
$\Om$ is a bounded open set in ${\mathbb R}^n$, $\partial \Om$
being of $C^2$-class;
\item
$\Gamma \subset \partial\Omega$ is an arbitrarily fixed sub-domain of $\Gamma$;
\item
the coefficients of the operator $A(x,D)$, defined in \eqref{oper-A}, satisfy the following conditions:
\begin{enumerate}[\rm (a)]
\item
$a_{i,j}\in C^2({\ov \Om})$, $a_{j,i}=a_{i,j}$, for any $i,j=1, \ldots, n$;
\item
$a_j\in C^1(\ov\Om)$ for any $j=0,\ldots, n$,
\item
$a_0\in C(\overline\Omega)$;
\item
$\sum_{i,j=1}^n\,a_{i,j}(x)\xi_i\xi_j \ge \mu_0|\xi|^2$ for any $x\in {\ov \Om}$, $\xi\in \rsp^n$ and some positive constant $\mu_0$;
\end{enumerate}
\item
$f_0\in L^2((0,T)\times \Omega)$ and $g\in H^1(0,T;L^2(\Om))\cap L^2(0,T;H^2(\Om))$;
\item
$f_1, f_2, f_3,f_4\in L^2(0,T;L^\infty(\Om))$;
\item
$\rho_1, \rho_2$ belong to $L^2((0,T);L^{\infty}(\Omega))$.
\item
$k$ is a measurable function in $(0,T)\times\Omega\times\Omega$. Moreover,
the functions $(t,y)\mapsto (l(t))^{3-\gamma}\|k(t,\cdot,y)\|_{L^1(\Omega)}$
and $(t,x)\mapsto (l(t))^{3-\gamma}\|k(t,x,\cdot)\|_{L^1(\Omega)}$ are bounded in $Q_T$.
\end{enumerate}
\end{hyp}

\begin{remark}
The conditions on $\rho_1$, $\rho_2$ and $k$ will be refined in Section \ref{sect3}.
\end{remark}

In this paper, our first main problem is:\\
(IP1):
{\it estimate the solution $u$
in $C((0,T);L^2(\Om))\cap L^2_{\rm loc}((0,T];H^1(\Om))$ to the problem}
\beqno
(IP1)\ \left\{ \hskip-2truemm
\bear{ll}
u\in H^1(0,T;L^2(\Om))\cap L^2(0,T;H^2(\Om)),
\\[2mm]
D_tu(t,x)-A(x,D)u(t,x)= {\mathcal B}u(t,x)+f_0(t,x),\q & (t,x)\in (0,T)\times \Om,
\\[2mm]
u(t,x)=g(t,x),& (t,x)\in (0,T)\times \partial\Om,
\\[2mm]
D_{\nu_A}u(t,x)=D_{\nu_A}g(t,x),& (t,x)\in (0,T)\times \G,
\eear \right.
\eeqno
where the linear operator ${\mathcal B}$ is defined by \eqref{2.12}
and \eqref{2.5}.
\med

We can consider another problem:\\
(IP1$'$) {\it estimate in
$C((0,T);L^2(\Om))\cap L^2_{\rm loc}([0,T);H^1(\Om))$ the solution $u$ to the problem}
\beqno
(IP1')\ \left\{ \hskip-2truemm
\bear{ll}
u\in H^1(0,T;L^2(\Om))\cap L^2(0,T;H^2(\Om)),
\\[2mm]
D_tu(t,x)+A(x,D)u(t,x)
= {\mathcal B}u(t,x)+f_0(t,x), \q & (t,x)\in (0,T)\times \Om,
\\[2mm]
u(t,x)=g(t,x),& (t,x)\in (0,T)\times \partial\Om,
\\[2mm]
D_{\nu_A}u(t,x)=D_{\nu_A}g(t,x),& (t,x)\in (0,T)\times \G.
\eear \right.
\eeqno
By the change of the unknown function
$w(t,x)=u(T-t,x)$ for $(t,x)\in (0,T)\times \Om$,
the problem (IP1$'$) changes to problem (IP1) with $({\mathcal B},f_0,g)$
being replaced by $(\widehat {\mathcal B},\widehat f_0,-\widehat g)$, where
${\what h}(t,x)=-h(T-t,x)$ for a given function $h$
and the linear operator ${\what {\mathcal B}}$ is defined by
\begin{align*}
{\what {\mathcal B}}w(t,x)=&{\what f}_1(t,x)w({\what T}_2,x)
+{\what f}_2(t,x)w({\what T}_1,x)-{\what f}_3(t,x)\int_{{\what T}_1}^{{\what T}_2}
{\what \rho_1}(\s,x)w(\s,x)\,d\s\no\\
&+ {\what B}w(t,x)
+ {\what f}_4(t,x)\int_{{\what T}_1}^{{\what T}_2} {\what \rho}_2(\s,x)
{\what B}w(\s,x)\,d\s
=:\sum_{j=1}^5\,{\what {\mathcal B}}_ju(t,x),
\end{align*}
where ${\what T}_1=T-T_2$, ${\what T}_2=T-T_1$ and
\beqno
{\what B}w(t,x)=\int_\Om {\what k}(t,x,y)w(t,y)\,dy.
\eeqno
Thus (IP1$'$) is led back to the problem (IP1), which is a
forward problem in time.
\med

\begin{remark}
It is a simply task to check that,
if Hypotheses \ref{hyp1} hold true for
$(f_0,f_1,f_2,f_3,f_4,\rho_1,\rho_2,k)$, then they hold true also for
$({\what f}_0,{\what f}_1,{\what f}_2,{\what f}_3,{\what f}_4,
{\what \rho}_0,{\what \rho}_1,{\what k})$. We stress that, if
the triplet $(\rho_1,\rho_2,k)$ satisfies the forthcoming Hypotheses \ref{hyp-1a},
then the triplet $({\what \rho}_1,{\what \rho}_2,{\what k})$ satisfies the same conditions with
the {\it same constants}, since $l(T-t)=l(t)$ for any $t\in [0,T]$.
\end{remark}

Coming back to problem (IP1) and
introducing the function
$v = u-g$,
where $u$ is the solution to problem (IP1), we can reduce (IP1)
to the problem with homogeneous boundary condition:
\beqno
(IP2)\ \left\{ \hskip-2truemm
\bear{ll}
v\in H^1(0,T;L^2(\Om))\cap L^2(0,T;H^2(\Om)),
\\[2mm]
D_tv(t,x)-A(x,D)v(t,x) = {\mathcal B}v(t,x)+{\wtil f}(t,x),
\q & (t,x)\in (0,T)\times \Om,
\\[2mm]
v(t,x)=0, & (t,x)\in (0,T)\times \partial \Om,
\\[2mm]
D_{\nu_A}v(t,x)=0, & (t,x)\in (0,T)\times \G,
\eear \right.
\eeqno
where
\beqn
\label{2.17}
{\wtil f}=f_0-D_tg+A(\cdot,D)g+{\mathcal B}g.
\eeqn
Therefore, we mainly consider problem $(IP2)$.

Now we state a key Carleman estimate.
For this purpose, we introduce the functions $\varphi_\l:{\ov \Om}\to \rsp$ and
$\a_\l:[0,T]\times {\ov \Om}\to \rsp$ with
$\l\in [1,+\infty)$, defined by
\beqn
\varphi_\l(x)=e^{\l \psi(x)},\qq\;\,
\a_\l(t,x)=\frac{e^{\l \psi(x)}-e^{2\l \|\psi\|_\infty}}{l(t)},
\q t\in (0,T)\;\,x\in\ov \Om.
\label{alpha}
\eeqn
By \cite[Lemma 2.4]{IY} (see also \cite{FI,I}) and
since $\varphi_\l(x)\ge 1$ for all $x\in {\ov \Om}$,
there exists $\widehat{\lambda}$ such that for any $\lambda\ge\widehat{\lambda}$ we can choose $\widehat{s}_0=\widehat{s}_0(\lambda) > 0$
and $C_1=C_1(\lambda)>0$ such that the following Carleman estimate
\begin{align}
&s^3\int_{Q_T} (l(t))^{-3}|v(t,x)|^2\exp[2s\a_\l(t,x)]\,dtdx\no\\
&+ s\int_{Q_T} (l(t))^{-1}|\nabla_xv(t,x)|^2\exp{[2s\a_\l(t,x)]}\,dtdx\no\\
&+s^{-1}e^{-\l\|\psi\|_{\infty}}\int_{Q_T}l(t)\bigg[|D_tv(t,x)|^2+\sum_{i,j=1}^n\,|D_{x_i}D_{x_j}v(t,x)|^2\bigg ]
\exp{[2s\a_\l(t,x)]}\,dtdx\no\\
\le &s^3\int_{Q_T} (l(t))^{-3}(\varphi_\l(x))^3|v(t,x)|^2\exp{[2s\a_\l(t,x)]}\,dtdx\no\\
&+ s\int_{Q_T}(l(t))^{-1}\varphi_\l(x)|\nabla_xv(t,x)|^2
\exp{[2s\a_\l(t,x)]}\,dtdx\no\\
&+s^{-1}\int_{Q_T}\!l(t)(\varphi_\l(x))^{-1}\bigg[|D_tv(t,x)|^2\!+\!\sum_{i,j=1}^n|D_{x_i}D_{x_j}v(t,x)|^2\bigg ]\!
\exp{[2s\a_\l(t,x)]}\,dtdx
\no\\
\le &6C_1\int_{Q_T}\bigg [|{\wtil f}(t,x)|^2+\sum_{j=1}^5\,|{\mathcal B}_jv(t,x)|^2\bigg ]\exp{[2s\a_\l(t,x)]}\,dtdx
\label{2.21}
\end{align}
is satisfied by all $s >\widehat{s}_0$ and any solution $v\in H^1(0,T;L^2(\Om))\cap
L^2(0,T;H^2(\Om)\cap H^1_0(\Om))$ to problem (IP2).
Moreover, the positive constants $C_1$, $\l$ and ${\what s}_0$
depend on $\mu_0$, $\mu_1$, $T$, $\|a_{i,j}\|_{L^\infty(\Om)}$,
$\|a_j\|_{L^\infty(\Om)}$, $\|a_0\|_{L^\infty(\Om)}$,
$i,j=1,\ldots,n$, $\Om$ and $\G$.

\begin{remark}
{\rm Note that the Carleman estimate in \cite[Lemma 2.4]{Y} actually contains the $L^2$-norms of $e^{s_0\alpha_{\lambda}}v$, $e^{s_0\alpha_{\lambda}}D_tv$ and
$e^{s_0\alpha_{\lambda}}D_{x_j}v$ ($j=1,\ldots,n$) on $(0,T)\times\Gamma$ on its right-hand side. In our situation all these terms
identically vanish on $(0,T)\times\partial\Omega$. Indeed, since $v=0$ almost everywhere on $(0,T)\times\Omega$, $D_tv$ and the tangential spatial derivatives of $v$
vanish almost everywhere on $(0,T)\times\partial\Omega$ as well. On the other hand, since the conormal derivative
of $v$ vanishes on $(0,T)\times\Gamma$ and for any $x\in\Gamma$ we can split an arbitrary vector of $\mathbb R^n$ along
$\nu_A(x)$ and the tangential directions, we conclude that $\nabla v$ vanishes almost everywhere on $(0,T)\times\Gamma$.}
\end{remark}

\section{Uniqueness result}
\label{sect3}
\setcounter{equation}{0}
In this section $\lambda\ge\what\lambda$ is fixed and for notational convenience we set
\begin{eqnarray*}
c_{1,\lambda}(\psi):= e^{2\l \|\psi\|_\infty}-e^{\l\psi_m}
\end{eqnarray*}
where $\psi_m$ denotes the minimum of the function $\psi$.

We also assume the following additional set of assumptions.

\begin{hyp}
\label{hyp-1a}
There exist five positive constants $K_j$ $(j=1,\ldots,5)$ and $0<T_1<T_2<T$ such that
\beqn
&&\label{3.12}
|\rho_1(t,x)|\le K_1\exp [s_0\alpha_{\lambda}(t,x)],\qquad\;\,(t,x)\in Q_T,
\\[2mm]
\hskip-15truemm
&&\label{3.28}
|\rho_2(t,x)|\le K_2\exp [s_0\alpha_{\lambda}(t,x)],\qquad\;\,(t,x)\in Q_{T_1,T_2},
\eeqn
Moreover,
\begin{equation}
\label{3.0d}
K_3:={\rm ess\,sup}_{(t,x)\in Q_T}\, (l(t))^\g\int_\Om |k(t,x,y)|\,dy
<+\infty,
\end{equation}
for some $\gamma\in [0,3]$ and
\begin{align}
&\;\;\;\;\;\;\int_{\{x\in \Om:\ \psi(x)>\psi(y)\}} |k(t,x,y)|\,dx
\le K_4(l(t))^{\g-3}\exp{[-2s_0c_{1,\lambda}(\psi)(l(t))^{-1}]},\label{3.15}
\\[1mm]
&\;\;\;\;\;\;\int_{_{\{x\in \Om:\ \psi(x)\le\psi(y)\}}}
|k(t,x,y)|\,dx\le K_5(l(t))^{\gamma-3},
\label{3.16}
\end{align}
for any $(t,y)\in Q_T$.
\end{hyp}

\begin{remark}
We stress that the condition \eqref{3.12} implies that the kernel
$\rho_1$ should exponentially decay to $0$ at $t=0$ and $t=T$.
\end{remark}

Next, we choose $s_0\ge\what s_0$ so as to satisfy the inequalities
\begin{align}
H_0(s_0):=&6C_1\Big\{\Big(2^{-6}T^6[(T_2\!-\!T_1)^{-1}\!+\!s_0^{1+\dl}]
+2^{-2}T^{3}s_0c_{1,\lambda}(\psi)\Big )\sum_{j=0}^1\|f_j\|^2_{L^2(0,T;L^{\infty}(\Omega))}\no\\[1mm]
&\phantom{6C_1\Big\{ } + 2^{-6}T^6(T_2-T_1)K_2^2\|f_{3}\|^2_{L^2(0,T;L^\infty(\Om))}
+K_3(K_4+K_5)\no\\[1mm]
&\phantom{6C_1\Big\{} +(T_2-T_1)K_1^2K_3(K_4+K_5)\|f_{4}\|^2_{L^2(0,T;L^\infty(\Om))}\Big\}
\le \frac{1}{2}s_0^3,\label{3.0b}
\\[2mm]
H_1(s_0):=&6C_1M_{T_1,T_2}^{-1}s_0^{-(1+\dl)}
\sum_{j=0}^1\|f_j\|^2_{L^\infty(\Om;L^2(0,T))}\le \frac{1}{2}s_0^{-1}e^{-\lambda\|\psi\|_{\infty}},
\label{3.0a}
\end{align}
$C_1$ being the positive constant in estimate \eqref{2.21},
$K_3, K_4, K_5$ being given in \eqref{3.0d}-\eqref{3.16} and $M_{T_1,T_2}=[\min\{T_1(T-T_1),T_2(T-T_2)\}]^{-1}$.
Observe then that, for all $(t,x)\in Q_T$, we have
\beqn
\label{3.8}
\exp[-2s_0c_{1,\lambda}(\psi)(l(t))^{-1}] \le \exp[2s_0\a_\l(t,x)]
\le 1,
\eeqn

Then we show our first main result.
\begin{theorem}
\label{teo-1}
Let Hypotheses $\ref{hyp1}$, $\ref{hyp-1a}$ and conditions \eqref{3.0b}, \eqref{3.0a} be satisfied. Further, let $u$
be a strong solution to problem $(IP1)$. Then, the following weighted estimate
\begin{align}
&\frac{1}{2}s_0^3\int_{Q_T}(l(t))^{-3}|v(t,x)|^2\exp{[2s_0\a_\l(t,x)]}\,dtdx\no\\
&+s_0\int_{Q_T}(l(t))^{-1}|\nabla_xv(t,x)|^2\exp{[2s_0\a_\l(t,x)]}\,dtdx
\no\\
& + \frac{1}{2}s_0^{-1}e^{-\lambda\|\psi\|_{\infty}}\int_{Q_T} l(t)|D_tv(t,x)|^2\exp{[2s_0\a_\l(t_j,x)]}\,dtdx
\no\\
\le & 6C_1\int_{Q_T} |{\wtil f}(t,x)|^2\exp{[2s_0\a_\l(t,x)]}\,dtdx,
\label{3.25}
\end{align}
holds true with $v=u-g$ and
$s\ge {\what s}_0$. In particular, problem $(IP1)$ admits
at most one solution.
\end{theorem}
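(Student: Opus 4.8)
The plan is to apply the Carleman estimate \eqref{2.21} at $s=s_0$ to $v:=u-g$ and then absorb into its left-hand side the five non-local contributions $6C_1\int_{Q_T}|{\mathcal B}_jv|^2\exp[2s_0\a_\l(t,x)]\d t\d x$, $j=1,\dots,5$, which appear on the right. The estimate \eqref{2.21} is applicable to $v$: since $u$ is a strong solution of $(IP1)$ and $g$ satisfies Hypothesis \ref{hyp1}(iv), $v\in H^1(0,T;L^2(\Om))\cap L^2(0,T;H^2(\Om))$, and $v=0$ on $(0,T)\times\partial\Om$ while $D_{\nu_A}v=0$ on $(0,T)\times\G$ (because $u=g$ and $D_{\nu_A}u=D_{\nu_A}g$ there), so $v$ solves $(IP2)$ with $\wtil f$ given by \eqref{2.17}; under Hypothesis \ref{hyp1} one has $\wtil f\in L^2(Q_T)$, whence $\int_{Q_T}|\wtil f|^2\exp[2s_0\a_\l]\d t\d x<\infty$ by \eqref{3.8}. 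Conditions \eqref{3.0b} and \eqref{3.0a} are exactly the quantitative smallness requirements under which, after the absorption, one retains $\tfrac{1}{2} s_0^3\int_{Q_T}(l(t))^{-3}|v|^2\exp[2s_0\a_\l]\d t\d x$ and $\tfrac{1}{2} s_0^{-1}e^{-\l\|\psi\|_\infty}\int_{Q_T}l(t)|D_tv|^2\exp[2s_0\a_\l]\d t\d x$ on the left (the second-order terms in \eqref{2.21} being discarded). Thus the proof reduces to bounding each $\int_{Q_T}|{\mathcal B}_jv|^2\exp[2s_0\a_\l]\d t\d x$ by a \emph{small} multiple of the quantities on the left of \eqref{2.21}.

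The decisive term is the spatial-kernel term ${\mathcal B}_4v=Bv$. Writing $|k|=|k|^{1/2}|k|^{1/2}$ and using the Cauchy--Schwarz inequality together with \eqref{3.0d} gives $|Bv(t,x)|^2\le\big(\int_\Om|k(t,x,y)|\d y\big)\int_\Om|k(t,x,y)||v(t,y)|^2\d y\le K_3(l(t))^{-\g}\int_\Om|k(t,x,y)||v(t,y)|^2\d y$, so after Fubini's theorem the matter reduces to estimating $\int_\Om|k(t,x,y)|\exp[2s_0\a_\l(t,x)]\d x$. I would split this over $\{x\in\Om:\psi(x)>\psi(y)\}$ and $\{x\in\Om:\psi(x)\le\psi(y)\}$. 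On the first set $\exp[2s_0\a_\l(t,x)]\le1$ by \eqref{3.8}, so \eqref{3.15} bounds the integral by $K_4(l(t))^{\g-3}\exp[-2s_0c_{1,\l}(\psi)(l(t))^{-1}]\le K_4(l(t))^{\g-3}\exp[2s_0\a_\l(t,y)]$, again by \eqref{3.8}. On the second set $x\mapsto\a_\l(t,x)$ is non-decreasing in $\psi(x)$, so $\exp[2s_0\a_\l(t,x)]\le\exp[2s_0\a_\l(t,y)]$ and \eqref{3.16} bounds the integral by $K_5(l(t))^{\g-3}\exp[2s_0\a_\l(t,y)]$. Altogether $\int_{Q_T}|Bv|^2\exp[2s_0\a_\l]\d t\d x\le K_3(K_4+K_5)\int_{Q_T}(l(t))^{-3}|v|^2\exp[2s_0\a_\l]\d t\d x$, i.e.\ precisely the $K_3(K_4+K_5)$-contribution in \eqref{3.0b}.

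For ${\mathcal B}_3v=f_3\int_{T_1}^{T_2}\rho_1(\s,\cdot)v(\s,\cdot)\d\s$ and ${\mathcal B}_5v=f_4\int_{T_1}^{T_2}\rho_2(\s,\cdot)Bv(\s,\cdot)\d\s$ I would insert the exponential majorants \eqref{3.12}, \eqref{3.28} of $\rho_1,\rho_2$ to extract a weight $\exp[s_0\a_\l(\s,x)]$, apply the Cauchy--Schwarz inequality in $\s\in(T_1,T_2)$, strip the weight off the $f_3,f_4$-factors by $\exp[2s_0\a_\l]\le1$ (so that they become their $L^2(0,T;L^\infty(\Om))$-norms), use $(l(\s))^{-3}\ge(T^2/4)^{-3}=2^6T^{-6}$ for $\s\in[T_1,T_2]$ to restore the $(l(\s))^{-3}$ weight, and, for ${\mathcal B}_5$, re-use the bound just obtained for ${\mathcal B}_4$; this gives the remaining $\|f_3\|^2$- and $\|f_4\|^2$-contributions of \eqref{3.0b}. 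For the time-trace terms ${\mathcal B}_1v=f_1v(T_1,\cdot)$ and ${\mathcal B}_2v=f_2v(T_2,\cdot)$ one first performs the $t$-integration, which replaces $|f_i|^2$ by weighted $L^2$-norms controlled by the $L^2(0,T;L^\infty(\Om))$- or the $L^\infty(\Om;L^2(0,T))$-norm of $f_i$, and is then left to bound $\int_\Om\exp[2s_0\a_\l(T_i,x)]|v(T_i,x)|^2\d x$; this I would do by integrating up to $t=T_i$ the $t$-derivative of $\int_\Om\exp[2s_0\a_\l(t,x)]|v(t,x)|^2\d x$ (the contribution at $t=0$ vanishing since $\exp[2s_0\a_\l(t,x)]\to0$ as $t\to0$), using $(l(t))^3|\partial_t\a_\l(t,x)|\le c_{1,\l}(\psi)|l'(t)|l(t)\le\tfrac{1}{4}c_{1,\l}(\psi)T^3$ to control the term coming from $\partial_t\exp[2s_0\a_\l]$ by a multiple of $s_0c_{1,\l}(\psi)T^3(l(t))^{-3}|v|^2\exp[2s_0\a_\l]$, and a Young inequality with parameter of order $s_0^{1+\dl}$ to split the cross term $v\,D_tv$ between $(l(t))^{-3}|v|^2\exp[2s_0\a_\l]$ and $l(t)|D_tv|^2\exp[2s_0\a_\l]$; this produces the $2^{-2}T^3s_0c_{1,\l}(\psi)$- and $s_0^{1+\dl}$-terms of \eqref{3.0b} and the term of \eqref{3.0a}. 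Collecting the five estimates and invoking \eqref{3.0b}, \eqref{3.0a}, one transfers $6C_1\sum_{j=1}^5\int_{Q_T}|{\mathcal B}_jv|^2\exp[2s_0\a_\l]\d t\d x$ to the left-hand side of \eqref{2.21}, which yields \eqref{3.25}. Finally, if $u_1,u_2$ are two strong solutions of $(IP1)$ with the same $(f_0,g)$, their difference $w:=u_1-u_2$ solves $(IP2)$ with $\wtil f=0$ and homogeneous boundary data, so \eqref{3.25} applied to $w$ (that is, with $v,g$ replaced by $w,0$) has a vanishing right-hand side; since $(l(t))^{-3}\exp[2s_0\a_\l(t,x)]>0$ on $(0,T)\times\Om$, the first term on the left forces $w\equiv0$, i.e.\ $u_1=u_2$.

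The main obstacle is precisely this absorption of the non-local terms, and chiefly of $Bv$: the splitting of $\Om$ along the level sets of $\psi$, combined with the sign of $\a_\l$ and the tailored decay conditions \eqref{3.15}--\eqref{3.16}, is what forces the contribution of the spatial kernel to be a controllable multiple of the leading term $s_0^3\int_{Q_T}(l(t))^{-3}|v|^2\exp[2s_0\a_\l]$ of the Carleman estimate, and this is exactly why Hypothesis \ref{hyp-1a} has the shape it has. The time-trace terms ${\mathcal B}_1,{\mathcal B}_2$ are the other delicate point, since a pointwise-in-time value of $v$ has to be traded for space--time weighted norms at a cost only polynomial in $s_0$; everything else is bookkeeping with the constants of \eqref{3.0b}--\eqref{3.0a}.
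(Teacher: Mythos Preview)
Your overall strategy coincides with the paper's: apply \eqref{2.21} at $s=s_0$, bound each $\int_{Q_T}|\mathcal{B}_jv|^2\exp[2s_0\alpha_\lambda]\,dt\,dx$ separately, and absorb via \eqref{3.0b}--\eqref{3.0a}. Your treatment of $\mathcal{B}_3$, $\mathcal{B}_4$, $\mathcal{B}_5$ is essentially identical to the paper's (your handling of the splitting $\{\psi(x)>\psi(y)\}$ vs.\ $\{\psi(x)\le\psi(y)\}$ for $B$ is in fact slightly cleaner than the paper's introduction of the auxiliary function $h_{s_0,\lambda}$), and the uniqueness argument at the end is the same.

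There is, however, a real gap in your proposed handling of the trace terms $\mathcal{B}_1,\mathcal{B}_2$. You write $\int_\Omega|v(T_i,x)|^2\exp[2s_0\alpha_\lambda(T_i,x)]\,dx=\int_0^{T_i}D_t\big(\int_\Omega|v|^2\exp[2s_0\alpha_\lambda]\,dx\big)\,dt$, relying on the vanishing of the weight at $t=0$. After Young's inequality the cross term yields $s_0^{-(1+\delta)}\int_0^{T_i}\!\int_\Omega|D_tv|^2\exp[2s_0\alpha_\lambda]\,dx\,dt$, but the Carleman left-hand side only carries $\int_{Q_T}l(t)|D_tv|^2\exp[2s_0\alpha_\lambda]$, and since $(l(t))^{-1}\to\infty$ as $t\to0^+$ you cannot bound the former by a constant times the latter. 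This is precisely why $M_{T_1,T_2}^{-1}$ appears in \eqref{3.0a} and why \eqref{3.0b} carries the extra $(T_2-T_1)^{-1}$-term that your sketch does not produce: the paper (Lemma~\ref{lem-3.2}) instead writes $|v(T_j,x)|^2\exp[2s_0\alpha_\lambda(T_j,x)]=|v(t,x)|^2\exp[2s_0\alpha_\lambda(t,x)]-\int_{T_j}^t D_s(\cdots)\,ds$ and then \emph{averages over $t\in(T_1,T_2)$}, so that all resulting time-integrals live on $[T_1,T_2]$, where $l(t)\ge M_{T_1,T_2}>0$ and one can legitimately insert the missing factor $l(t)$ at the cost of $M_{T_1,T_2}^{-1}$. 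Your argument is easily repaired by adopting this averaging device rather than integrating from $t=0$.
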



The rest of this section is devoted to the proof of Theorem 3.1.

\subsection{Estimating ${\mathcal B}_1$ and ${\mathcal B}_2$}
First we need some weighted trace results.



\begin{lemma}
\label{lem-3.2}
The following estimate holds true for all
$w\in H^1(T_1,T_2;L^2(\Om))$, $r_0\ge 0$, $\ve>0$ and $j=1,2$:
\begin{align}
&\int_{\Om}|w(T_j,x)|^2\exp{[2r_0\a_\l(T_j,x)]}\,dx\notag\\
\le &\ve^2\int_{Q_{T_1,T_2}}|D_tw(t,x)|^2\exp{[2r_0\a_\l(t,x)]}\,dtdx
\no\\
& + \int_{Q_{T_1,T_2}} |w(t,x)|^2\{(T_2-T_1)^{-1}+\ve^{-2}
+2r_0c_{1,\lambda}(\psi)|l'(t)|(l(t))^{-2}\}\notag\\
&\qquad\qquad\qquad\times
\exp{[2r_0\a_\l(t,x)]}\,dtdx.
\label{3.4b}
\end{align}
\end{lemma}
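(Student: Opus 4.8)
The plan is to obtain \eqref{3.4b} from the fundamental theorem of calculus applied, for almost every $x\in\Om$, to the scalar function $t\mapsto G(t,x):=|w(t,x)|^2\exp{[2r_0\a_\l(t,x)]}$ on $[T_1,T_2]$. I would first observe that, since $T_1,T_2\in(0,T)$, the function $l$ stays bounded away from $0$ on $[T_1,T_2]$, so $\a_\l(\cdot,x)$ and the weight $\exp{[2r_0\a_\l(\cdot,x)]}$ are of class $C^1$ there and bounded by $1$, while $w(\cdot,x)\in H^1(T_1,T_2)$ — hence absolutely continuous — for almost every $x$; thus $G(\cdot,x)$ is absolutely continuous on $[T_1,T_2]$ and $D_tG=[2wD_tw+2r_0|w|^2D_t\a_\l]\exp{[2r_0\a_\l]}$ almost everywhere.

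Next I would control the time derivative of the weight exponent: from the definition of $\a_\l$ one has $D_t\a_\l(t,x)=(e^{2\l\|\psi\|_\infty}-e^{\l\psi(x)})l'(t)(l(t))^{-2}$, and since $\psi_m\le\psi(x)\le\|\psi\|_\infty$ on $\ov\Om$ we get $0\le e^{2\l\|\psi\|_\infty}-e^{\l\psi(x)}\le c_{1,\l}(\psi)$, whence $|D_t\a_\l(t,x)|\le c_{1,\l}(\psi)|l'(t)|(l(t))^{-2}$ on $Q_{T_1,T_2}$. Combining this with Young's inequality $2|w||D_tw|\le\ve^2|D_tw|^2+\ve^{-2}|w|^2$ applied to the cross term of $D_tG$ yields the pointwise bound
\[
|D_tG(t,x)|\le\Big(\ve^2|D_tw(t,x)|^2+\big[\ve^{-2}+2r_0c_{1,\l}(\psi)|l'(t)|(l(t))^{-2}\big]|w(t,x)|^2\Big)\exp{[2r_0\a_\l(t,x)]}.
\]

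Finally I would recover the trace by averaging the identity $G(T_j,x)=G(t,x)-\int_{T_j}^{t}D_\tau G(\tau,x)\d\tau$ (valid for every $t\in(T_1,T_2)$ and $j=1,2$) over $t\in(T_1,T_2)$ and bounding the inner integral by $\int_{T_1}^{T_2}|D_\tau G(\tau,x)|\d\tau$, which gives $G(T_j,x)\le(T_2-T_1)^{-1}\int_{T_1}^{T_2}G(t,x)\d t+\int_{T_1}^{T_2}|D_\tau G(\tau,x)|\d\tau$; inserting here the pointwise bound on $|D_tG|$ from the previous step and integrating in $x\in\Om$ by Fubini's theorem produces exactly the three terms on the right-hand side of \eqref{3.4b} — the factor $(T_2-T_1)^{-1}$ from the average of $G$, the coefficients $\ve^{-2}$ and $2r_0c_{1,\l}(\psi)|l'(t)|(l(t))^{-2}$ in front of $|w|^2$ and the coefficient $\ve^2$ in front of $|D_tw|^2$ from the bound on $|D_tG|$. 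I do not expect a genuine obstacle: the only point needing a word of care is the absolute continuity of $t\mapsto G(t,x)$, which is immediate precisely because $T_1$ and $T_2$ are interior points of $(0,T)$, so the singularities of the Carleman weight at $t=0$ and $t=T$ play no role in this lemma.
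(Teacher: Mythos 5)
Your proposal is correct and follows essentially the same route as the paper: the identity $G(T_j,x)=G(t,x)-\int_{T_j}^{t}D_\tau G(\tau,x)\,d\tau$ for $G=|w|^2\exp[2r_0\a_\l]$, Young's inequality on the cross term, the bound $e^{2\l\|\psi\|_\infty}-e^{\l\psi(x)}\le c_{1,\l}(\psi)$, averaging over $t\in(T_1,T_2)$ to produce the factor $(T_2-T_1)^{-1}$, and a final integration in $x$. The only cosmetic difference is that you justify the pointwise computation via absolute continuity of $t\mapsto G(t,x)$ on $[T_1,T_2]$, whereas the paper invokes a density argument to reduce to smooth $w$; both are fine.
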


\begin{proof}
By a density argument, we can assume that $w$ is smooth enough.
We arbitrary fix $x\in\Omega$. From the identity
\begin{align*}
&|w(t,x)|^2\exp{[2r_0\a_\l(t,x)]} - |w(T_j,x)|^2\exp{[2r_0\a_\l(T_j,x)]}\\
= &\int_{T_j}^t D_s\{|w(s,x)|^2\exp{[2r_0\a_\l(s,x)]}\}\,ds
\no\\
=&2\int_{T_j}^t w(s,x)D_sw(s,x)\exp{[2r_0\a_\l(s,x)]}\,ds\\
& + 2r_0\int_{T_j}^t |w(s,x)|^2(e^{2\lambda\|\psi\|_{\infty}}-e^{\lambda\psi(x)})l'(s)(l(s))^{-2}\exp{[2r_0\a_\l(t,x)]})\,ds,
\end{align*}
which holds true for $j=1,2$, and Young inequality we easily deduce that the following inequality holds for all
$t\in (T_1,T_2)$, $\ve \in \rsp_+$ and for $j=1,2$:
\begin{align*}
&|w(T_j,x)|^2\exp{[2r_0\a_\l(T_j,x)]}\\
 \le &|w(t,x)|^2\exp{[2r_0\a_\l(t,x)]} + \ve^2\bigg |\int_{T_j}^t |D_sw(s,x)|^2\exp{[2r_0\a_\l(s,x)]}\,ds\bigg |\\
&+\ve^{-2}\bigg |\int_{T_j}^t |w(s,x)|^2\exp{[2r_0\a_\l(s,x)]}\,ds\bigg |\\
&+ 2r_0\bigg |\int_{T_j}^t |w(s,x)|^2c_{1,\lambda}(\psi)|l'(s)|(l(s))^{-2}\exp{[2r_0\a_\l(t,x)]}\,ds\bigg |\\
\le & |w(t,x)|^2\exp{[2r_0\a_\l(t,x)]} + \ve^2\int_{T_1}^{T_2} |D_sw(s,x)|^2\exp{[2r_0\a_\l(s,x)]}\,ds\\
&+ \int_{T_1}^{T_2} |w(s,x)|^2[\ve^{-2}+2r_0c_{1,\lambda}(\psi)|l'(s)|(l(s))^{-2}]\exp{[2r_0\a_\l(s,x)]}\,ds.
\end{align*}
Integrating over $(T_1,T_2)$ the first and last side of the previous chain of inequalities yields
\begin{align*}
&|w(T_j,x)|^2\exp{[2r_0\a_\l(T_j,x)]}\notag\\
\le &\ve^2\int_{T_1}^{T_2} |D_tw(t,x)|^2\exp{[2r_0\a_\l(t,x)]}\,dt
\no\\
& +\!\int_{T_1}^{T_2} |w(t,x)|^2\{(T_2-T_1)^{-1}\!+\!\ve^{-2}\!+\!2r_0c_{1,\lambda}(\psi)|l'(t)|(l(t))^{-2}\}\exp{[2r_0\a_\l(t,x)]}\,dt.
\end{align*}
Finally, an integration over $\Omega$ leads to the assertion.
\end{proof}

%
The needed estimates for ${\mathcal B}_1$ and ${\mathcal B}_2$ follow
from \eqref{3.4b}, if we choose  $\ve=s_0^{-(1+\dl)/2}$, with $\dl\in (0,1)$, and observe that $l(t)\ge M_{T_1,T_2}:=\min\{T_1(T-T_1), T_2(T-T_2)\}$ for
any $t\in [T_1,T_2]$ and $l(t)\le 2^{-2}T^2$, $|l'(t)|\le T$ for any $t\in [0,T]$.
Indeed,
\begin{align}
&\int_{Q_T} |{\mathcal B}_jv(t,x)|^2\exp{[2s_0\a_\l(T_j,x)]}\,dtdx
\no\\
=&\int_{\Om}|v(T_j,x)|^2\exp{[2s_0\a_\l(T_j,x)]}\,dx\int_0^T |f_j(t,x)|^2\,dt\no\\
\le &\|f_j\|^2_{L^\infty(\Om;L^2(0,T))}\int_{\Om}|v(T_j,x)|^2
\exp{[2s_0\a_\l(T_j,x)]}\,dx\no\\
\le &s_0^{-(1+\dl)}\|f_j\|^2_{L^2(0,T;L^{\infty}(\Omega))}
\int_{Q_{T_1,T_2}} (l(t))^{-1}l(t)|D_tv(t,x)|^2
\exp{[2s_0\a_\l(t,x)]}\,dtdx
\no\\
& + \|f_j\|^2_{L^2(0,T;L^{\infty}(\Omega))}
\int_{Q_{T_1,T_2}} \{[(T_2-T_1)^{-1}+s_0^{1+\dl}](l(t))^{3}
+ 2s_0c_{1,\lambda}(\psi)|l'(t)|l(t)\}\no\\
& \phantom{\qq+ \|f_j\|^2_{L^\infty(\Om;L^2(0,T))}\int_{Q_{T_1,T_2}} \{ }
\times (l(t))^{-3}|v(t,x)|^2\exp{[2s_0\a_\l(t,x)]}\,dtdx
\no\\
\le &s_0^{-(1+\dl)}\|f_j\|^2_{L^2(0,T;L^{\infty}(\Omega))}M_{T_1,T_2}^{-1}
\int_{Q_{T_1,T_2}}l(t)|D_tv(t,x)|^2\exp{[2s_0\a_\l(t,x)]}\,dtdx
\no\\
&+ \|f_j\|^2_{L^2(0,T;L^{\infty}(\Omega))}
\{2^{-6}T^6[(T_2-T_1)^{-1}+s_0^{1+\dl}]
+ 2^{-2}T^3s_0c_{1,\lambda}(\psi)\}
\no\\
&\qquad\qquad
\times \int_{Q_{T_1,T_2}}(l(t))^{-3}|v(t,x)|^2\exp{[2s_0\a_\l(t,x)]}\,dtdx,
\label{3.7}
\end{align}
for $j=1,2$.

\subsection{Estimating ${\mathcal B}_3$}
Using \eqref{3.12}, \eqref{3.8} and again the condition $\|l\|_{\infty}\le 2^{-2}T^2$, we easily obtain the following chain of inequalities:
\begin{align}
&\int_{Q_T} |{\mathcal B}_3v(t,x)|^2\exp{[2s_0\a_\l(t,x)]}\,dtdx
\no\\
= &\int_{Q_T} \exp{[2s_0\a_\l(t,x)]}|f_3(t,x)|^2\bigg|
\int_{T_1}^{T_2} \rho_1(\s,x)v(\s,x)\,d\s\bigg|^2\,dtdx
\no\\
\le &(T_2-T_1)\int_{Q_T} \exp{[2s_0\a_\l(t,x)]}|f_3(t,x)|^2\,dtdx
\int_{T_1}^{T_2} |\rho_1(\s,x)|^2|v(\s,x)|^2\,d\s
\no\\
=& (T_2-T_1)\int_{Q_{T_1,T_2}} |\rho_1(\s,x)|^2|v(\s,x)|^2\,d\s dx
\int_0^T \exp{[2s_0\a_\l(t,x)]}|f_3(t,x)|^2\,dt
\no\\
\le &(T_2-T_1) \int_{Q_{T_1,T_2}} |\rho_1(\s,x)|^2 |v(\sigma,x)|^2
\,dxd\s \int_0^T \|f_3(t,\cdot)\|^2_{L^\infty(\Om)}\,dt
\no\\
\le &(T_2-T_1)\|f_3\|^2_{L^2(0,T;L^\infty(\Om))}K_2^2\int_{Q_{T_1,T_2}}
\exp{[2s_0\a_\l(\s,x)]}
|v(\s,x)|^2\,d\s dx
\no\\
\le &(T_2-T_1)\|f_3\|^2_{L^2(0,T;L^\infty(\Om))}
K_2^2\int_{Q_T} (l(\s))^3(l(\s))^{-3}\exp{[2s_0\a_\l(\s,x)]}|v(\s,x)|^2\,d\s dx\no\\
\le &(T_2-T_1)\|f_3\|^2_{L^2(0,T;L^\infty(\Om))}2^{-6}T^6K_2^2\no\\
&\quad\;\times \int_{Q_T} (l(\s))^{-3}\exp{[2s_0\a_\l(\s,x)]}|v(\s,x)|^2\,d\s dx.
\label{3.10}
\end{align}

\subsection{Estimating ${\mathcal B}_4=B$}
Via H\"older's inequality, we obtain
\begin{align}
&\int_{Q_T} |{\mathcal B}_4v(t,x)|^2\exp{[2s_0\a_\l(t,x)]}\,dtdx
\no\\
= &\int_{Q_T} \bigg|\int_\Om
k(t,x,y)v(t,y)\,dy\bigg|^2\exp{[2s_0\a_\l(t,x)]}\,dtdx
\no\\
\le & K_2\int_0^T (l(t))^{-\g}\,dt\int_{\Om} \exp{[2s_0\a_\l(t,x)]}\,dx
\int_\Om |k(t,x,y)||v(t,y)|^2\,dy
\no\\
\le & K_2 \int_{Q_T} (l(t))^{-3}|v(t,y)|^2\,dtdy
\int_{\Om} (l(t))^{3-\g}\exp{[2s_0\a_\l(t,x)]}|k(t,x,y)|\,dx,
\label{3.13}
\end{align}
$K_2$ being defined by \eqref{3.0d}.
\pn
Setting
$h_{s_0,\l}(t,x,y)=(l(t))^{3-\g}\exp{\{2s_0[\a_\l(t,x)-\a_\l(t,y)]\}}$,
we easily deduce the estimates
\begin{align}
h_{s_0,\l}(t,x,y)\le &(l(t))^{3-\g}\exp{\{2s_0[\exp{(\l\|\psi\|_\infty)}
-\exp{(\l\psi_m})](l(t))^{-1}\}}\no\\
\le& (l(t))^{3-\g}\exp{[2s_0c_{1,\lambda}(\psi)(l(t))^{-1}]},
\label{3.18}
\end{align}
if $t\in [0,T]$ and $\psi(x)>\psi(y)$, and
\begin{align}
h_{s_0,\l}(t,x,y)\le (l(t))^{3-\g},
\label{3.19}
\end{align}
if $t\in [0,T]$ and $\psi(x)\le \psi(y)$.
Then from \eqref{3.15}, \eqref{3.16}, \eqref{3.18} and \eqref{3.19},
we obtain
\begin{align}
\int_{\Om} h_{s_0,\l}(t,x,y)|k(t,x,y)|\,dx
=&\int_{\{x\in \Om:\ \psi(x)>\psi(y)\}} h_{s_0,\l}(t,x,y)|k(t,x,y)|\,dx
\no\\
&+ \int_{\{x\in \Om:\ \psi(x) \le \psi(y)\}} h_{s_0,\l}(t,x,y)|k(t,x,y)|\,dx
\no\\[1mm]
\le & (l(t))^{3-\g}\exp{[2s_0c_{1,\lambda}(\psi)(l(t))^{-1}]}\no\\
&\qquad\times
\int_{\{x\in \Om:\ \psi(x)>\psi(y)\}} |k(t,x,y)|\,dx\no\\
&+ (l(t))^{3-\g}\int_{\{x\in \Om:\ \psi(x)\le \psi(y)\}}
|k(t,x,y)|\,dx\no\\
\le & K_4+K_5,
\label{3.20}
\end{align}
for any $(t,y)\in Q_T$.
Hence, from \eqref{3.13} and \eqref{3.20} we easily deduce the estimate
\begin{align}
&\int_{Q_T} \exp{[2s_0\a_\l(t,x)]}|{\mathcal B}_4v(t,x)|^2\,dtdx
\no\\
\le & K_3(K_4+K_5)\int_{Q_T} (l(t))^{-3}|v(t,x)|^2
\exp{[2s_0\a_\l(t,x)]}\,dtdx.
\label{star}
\end{align}

\subsection{Estimating ${\mathcal B}_5$}
By the definition of ${\mathcal B}_5$ in \eqref{2.12}, estimates \eqref{3.28}, \eqref{3.8} and \eqref{star} we obtain
\begin{align}
&\int_{Q_T} |{\mathcal B}_5v(t,x)|^2\exp{[2s_0\a_\l(t,x)]}\,dtdx
\no\\
= &\int_{Q_T} \exp{[2s_0\a_\l(t,x)]}|f_4(t,x)|^2\bigg|\int_{T_1}^{T_2}
\rho_2(\s,x)Bv(\s,x)\,d\s\bigg|^2\,dtdx\no\\
\le &(T_2-T_1)\int_{Q_T} \exp{[2s_0\a_\l(t,x)]}|f_4(t,x)|^2
\,dtdx\int_{T_1}^{T_2} |\rho_2(\s,x)|^2|Bv(\s,x)|^2\,d\s\no\\
\le &(T_2-T_1)\int_{Q_T}|f_4(t,x)|^2\,dtdx\int_{T_1}^{T_2}|\rho_2(\sigma,x)|^2|Bv(\s,x)|^2\,d\s\no\\
\le &(T_2-T_1)\int_0^T\|f_4(t,\cdot)\|_{L^{\infty}(\Omega)}^2\,dt\int_{Q_{T_1,T_2}}|\rho_2(\sigma,x)|^2|Bv(\s,x)|^2\,d\s dx\no\\
\le &(T_2-T_1)K_1^2\|f_4\|^2_{L^2(0,T;L^\infty(\Om))}\int_{Q_{T_1,T_2}}
\exp{[2s_0\a_\l(\s,x)]}|Bv(\s,x)|^2\,d\s dx
\no\\
\le & (T_2-T_1)K_1^2K_3(K_4+K_5)\|f_4\|^2_{L^2(0,T;L^\infty(\Om))}
\no\\
&\qquad\quad\times \int_{Q_T} (l(\s))^{-3}\exp{[2s_0\a_\l(\s,x)]}|v(\s,x)|^2\,d\s dx.
\label{3.24}
\end{align}

We can now complete the proof of Theorem \ref{teo-1}. From \eqref{3.7}, \eqref{3.10}, \eqref{3.20} and
\eqref{3.24}, we easily deduce the following estimate:
\begin{align*}
&6C_1\int_{Q_T}\sum_{j=1}^5|{\mathcal B}_jv(t,x)|^2\exp{[2s_0\a_\l(t,x)]}\,dtdx\no\\
\le & H_0(s_0)\int_{Q_T} (l(t))^{-3}\exp{[2s_0\a_\l(t,x)]}|v(t,x)|^2\,d\s dx
\no\\
&+ H_1(s_0)\int_{Q_T}l(t)|D_tv(t,x)|^2\exp{[2s_0\a_\l(t_j,x)]}\,dtdx,\no\\
\le &\frac{1}{2}s_0^3\int_{Q_T} (l(t))^{-3}\exp{[2s_0\a_\l(t,x)]}
|v(t,x)|^2\,d\s dx\no\\
& + \frac{1}{2}s_0^{-1}e^{-\lambda\|\psi\|_{\infty}}\int_{Q_T}l(t)|D_tv(t,x)|^2\exp[2s_0\a_\l(t,x)]\,dtdx.
\end{align*}
Then from \eqref{2.21}, with $s=s_0$, we deduce the estimate
\eqref{3.25}.
Thus, we have proved Theorem \ref{teo-1}.

\section{A continuous dependence result}
\label{sect4}
\setcounter{equation}{0}

The main result of this section is the following:

\begin{theorem}
\label{teo-4.1}
Under Hypotheses $\ref{hyp1}$, $\ref{hyp-1a}$ and conditions \eqref{3.0b}, \eqref{3.0a}, the strong solution $u$ to
problem $(IP1)$ satisfies the continuous dependence
estimate
\begin{align*}
&\|u(\tau,\cdot)\|^2_{L^2(\Om)}+ 2\mu_2\int_{2\ve T}^\tau
\|\nabla_xu(t,\cdot)\|^2_{L^2(\Om)}\,dt
\no\\
\le & C(\ve)\big[\|f_0\|^2_{L^2(Q_T)} + \|g\|^2_{H^1(0,T;L^2(\Om))}+ \|A(\cdot,D)g\|^2_{L^2(Q_T)}\big],
\end{align*}
for any $\ve>0$, any $\tau\in [\ve T,T]$ and some suitable positive constant $C(\ve)$ depending on $\ve$.
\end{theorem}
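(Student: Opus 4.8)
The plan is to feed the weighted estimate \eqref{3.25} of Theorem~\ref{teo-1} into a classical, forward-in-time parabolic energy estimate: \eqref{3.25} controls $v=u-g$ on every time strip that stays away from $t=0$ and $t=T$, while the energy estimate propagates that control up to $t=T$ and strips off the Carleman weight.

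First I would rewrite the right-hand side of \eqref{3.25} in terms of the data. Set $\mathcal D:=\|f_0\|^2_{L^2(Q_T)}+\|g\|^2_{H^1(0,T;L^2(\Om))}+\|A(\cdot,D)g\|^2_{L^2(Q_T)}$ and recall $\wtil f=f_0-D_tg+A(\cdot,D)g+\mathcal Bg$ from \eqref{2.17}. Using $\exp[2s_0\a_\l]\le1$, the contribution of $f_0-D_tg+A(\cdot,D)g$ is $\le3\mathcal D$, while the contribution of $\mathcal Bg$ is estimated by running the computations \eqref{3.7}, \eqref{3.10}, \eqref{star}, \eqref{3.24} verbatim with $g$ in place of $v$; since $\sup_{Q_T}(l(t))^{-3}\exp[2s_0\a_\l]<+\infty$ (the exponential decay at $t=0,T$ beats the polynomial blow-up) and $l(t)\le2^{-2}T^2$, this yields $\int_{Q_T}|\mathcal Bg|^2\exp[2s_0\a_\l]\,dtdx\le C\|g\|^2_{H^1(0,T;L^2(\Om))}$, so that the whole left-hand side of \eqref{3.25} is $\le C\mathcal D$. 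Fixing $\ve\in(0,\tfrac12)$ and observing that on every $[a,b]\subset(0,T)$ the three weights in \eqref{3.25} are bounded from below by a positive constant, I deduce $\int_a^b\bigl(\|v(t)\|^2_{L^2(\Om)}+\|\nabla_xv(t)\|^2_{L^2(\Om)}+\|D_tv(t)\|^2_{L^2(\Om)}\bigr)\,dt\le C_{a,b}\mathcal D$. Taking $[a,b]=[\ve T,2\ve T]$ and using the mean value theorem for integrals, I fix $t_0\in(\ve T,2\ve T)$ with $\|v(t_0)\|^2_{L^2(\Om)}+\|\nabla_xv(t_0)\|^2_{L^2(\Om)}\le C_\ve\mathcal D$; taking $[a,b]=[T_1,T_2]$ and using $H^1(T_1,T_2;L^2(\Om))\hookrightarrow C([T_1,T_2];L^2(\Om))$, I also record $\sup_{[T_1,T_2]}\|v(t)\|^2_{L^2(\Om)}\le C\mathcal D$.

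The core is the energy estimate. Since $v$ vanishes on $(0,T)\times\partial\Om$, testing $D_tv-A(\cdot,D)v=\mathcal Bv+\wtil f$ against $v(t,\cdot)$ and integrating by parts produces no boundary term; absorbing the first- and zero-order parts of $A(\cdot,D)$ into the principal part and into $\|v\|^2_{L^2(\Om)}$ gives, with $2\mu_2=\mu_0$ (the ellipticity constant),
\[\frac{d}{dt}\|v(t)\|^2_{L^2(\Om)}+2\mu_2\|\nabla_xv(t)\|^2_{L^2(\Om)}\le C\|v(t)\|^2_{L^2(\Om)}+2\langle\mathcal Bv(t),v(t)\rangle+2\langle\wtil f(t),v(t)\rangle.\]
The contributions of $\mathcal B_1,\mathcal B_2,\mathcal B_3,\mathcal B_5$ and of $\wtil f-\mathcal B_4g$ are routine: by Young's inequality and the bounds on $f_1,\dots,f_4,\rho_1,\rho_2$ each of them splits into $\tfrac12\|v(t)\|^2_{L^2(\Om)}$ plus a term whose integral over $(0,T)$ is $\le C_\ve\mathcal D$, the factors $\|v(T_j)\|^2_{L^2(\Om)}$, $\|v\|^2_{L^2(T_1,T_2;L^2(\Om))}$ and $\|g(T_j)\|^2_{L^2(\Om)}$ being supplied by the previous step. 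The delicate terms are the ones built from the ``instantaneous'' kernel operator $B$, namely $\langle\mathcal B_4v(t),v(t)\rangle=\langle Bv(t),v(t)\rangle$ and the piece $\langle(\mathcal B_4g)(t),v(t)\rangle$ of $\langle\wtil f(t),v(t)\rangle$, since $\|B(t)\|_{\mathcal L(L^2(\Om))}$ may blow up as $t\to0,T$. Here I would split $k$ into its restrictions to $\{\psi(x)>\psi(y)\}$ and $\{\psi(x)\le\psi(y)\}$: by \eqref{3.15} and \eqref{3.0d}, Schur's lemma bounds the $\mathcal L(L^2(\Om))$-norm of the first part by $\bigl(K_3(l(t))^{-\g}K_4(l(t))^{\g-3}\exp[-2s_0c_{1,\lambda}(\psi)(l(t))^{-1}]\bigr)^{1/2}$, which is \emph{bounded} uniformly in $t$ because the exponential dominates the power; for the second part one uses \eqref{3.16} and \eqref{3.0d}, and the residual power of $l(t)$ is reabsorbed --- via a Young inequality weighted by $\exp[2s_0\a_\l]$ and the elementary fact $\sup_{t\in(0,T)}(l(t))^{-N}\exp[-2s_0c_{1,\lambda}(\psi)(l(t))^{-1}]<+\infty$ --- into the Carleman-controlled quantities of \eqref{3.25} plus a multiple of $\|v(t)\|^2_{L^2(\Om)}$. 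This produces a differential inequality $\tfrac{d}{dt}\|v(t)\|^2_{L^2(\Om)}+2\mu_2\|\nabla_xv(t)\|^2_{L^2(\Om)}\le c(t)\|v(t)\|^2_{L^2(\Om)}+h(t)$ on $[t_0,\tau]$ with $\int_{t_0}^\tau c(t)\,dt\le C_\ve$ and $\int_{t_0}^\tau h(t)\,dt\le C_\ve\mathcal D$, so Gronwall's lemma gives $\|v(\tau)\|^2_{L^2(\Om)}+2\mu_2\int_{t_0}^\tau\|\nabla_xv(t)\|^2_{L^2(\Om)}\,dt\le C_\ve\mathcal D$ for every $\tau\in[\ve T,T]$. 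Since $u=v+g$, $g\in H^1(0,T;L^2(\Om))\hookrightarrow C([0,T];L^2(\Om))$, $t_0<2\ve T$, and $\|\nabla_xg\|^2_{L^2(Q_T)}$ is dominated by $\mathcal D$ (elliptic estimates, the constants being chosen to absorb the resulting loss), passing to $u$ yields the claimed inequality.

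The step I expect to be the main obstacle is precisely the treatment of the kernel terms $\mathcal B_4=B$ (and the $\mathcal B_4g$ part of $\wtil f$) in the energy estimate: one must reconcile the admissible singularity of $k(t,\cdot,\cdot)$ at $t=0,T$, governed by the exponent $\g\in[0,3]$, with the fact that the Carleman weight $\exp[2s_0\a_\l]$ degenerates exactly at $t=0,T$. This is what the finely tuned assumptions \eqref{3.0d}, \eqref{3.15}, \eqref{3.16} of Hypothesis~\ref{hyp-1a} --- in particular the exponential factor in \eqref{3.15}, matched to $c_{1,\lambda}(\psi)(l(t))^{-1}$ --- are designed to make possible, and the careful bookkeeping of which pieces become Gronwall coefficients, which become $L^1$-in-time sources, and which are absorbed by \eqref{3.25} is where the real work lies.
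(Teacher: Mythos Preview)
Your strategy is sound and close in spirit to the paper's, but the execution differs in two places worth noting.

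First, where you pick a ``good'' initial time $t_0\in(\ve T,2\ve T)$ by the mean value theorem and then run a standard energy/Gronwall argument from $t_0$, the paper instead multiplies $v$ by a cutoff $\sigma_\ve\in W^{1,\infty}(0,T)$ with $\sigma_\ve\equiv0$ on $[0,\ve T]$ and $\sigma_\ve\equiv1$ on $[2\ve T,T]$, so that $v_\ve=\sigma_\ve v$ has zero initial data. The energy estimate for $v_\ve$ then produces, besides the Gronwall coefficient, the extra source term $\sigma_\ve'(t)v(t)$ supported in $[\ve T,2\ve T]$, which is controlled directly by \eqref{3.25}. To close, the paper does not use the ordinary Gronwall lemma but a Bihari-type inequality (their Lemma~\ref{lem-4.4}) adapted to the structure $z(\tau)\le a+\int b\,z+\int k\,z^{1/2}$, so that the $\|\wtil f_\ve(t)\|_{L^2}\|v_\ve(t)\|_{L^2}$ term need not be split by Young. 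Your mean-value-plus-Gronwall variant is a legitimate alternative and arguably more elementary; the cutoff-plus-Bihari route avoids singling out a particular $t_0$ and keeps all constants explicit.

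Second, your treatment of $\mathcal B_4=B$ is considerably more elaborate than the paper's. The paper does \emph{not} revisit the $\{\psi(x)\gtrless\psi(y)\}$ splitting here; it simply records (from Hypotheses~\ref{hyp1}(vii) and \eqref{3.0d}, \eqref{4.8}) the Holmgren/Schur bound $\|Bw(t)\|_{L^2(\Om)}\le\sqrt{K_3K_6}\,(l(t))^{\gamma-3}\|w(t)\|_{L^2(\Om)}$ (their \eqref{4.9}) and absorbs the resulting factor directly into the Gronwall coefficient $b_\ve(t)$. Your proposed ``weighted Young + reabsorb into \eqref{3.25}'' mechanism for the piece with $\psi(x)\le\psi(y)$ is left vague: once you pass to the unweighted energy, the Carleman weight $e^{2s_0\alpha_\lambda}$ has already been removed, and re-inserting it on only one factor of $|v(t,x)||v(t,y)|$ forces the inverse weight onto the other, which blows up near $t=T$; your ``elementary fact'' about $(l(t))^{-N}e^{-2s_0c_{1,\lambda}(\psi)/l(t)}$ controls products with the \emph{decaying} exponential, not with $e^{-2s_0\alpha_\lambda}$. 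The paper sidesteps this by keeping the whole $B$-contribution as part of the (possibly singular in $t$) Gronwall kernel $b_\ve$; you should do the same rather than try to feed it back into \eqref{3.25}.
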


\begin{remark} If $f_0=g=0$, then $v=0$ in $[2\ve T,T]\times \Om$
for all $\ve\in (0,1/2)$. This implies $u=g=0$ in $(0,T]\times \Om$.
In particular, since $u\in H^1(0,T;L^2(\Om))\hookrightarrow
C([0,T];L^2(\Om))$, we can conclude that $u=0$ in $C([0,T];L^2(\Om))$, i.e.,
{\it uniqueness} holds true for the solution to problem (IP1). Moreover,
the continuous dependence results estimate the solution in $C((0,T];L^2(\Om))
\cap L^2_{\rm loc}((0,T];H^1(\Om))$ and the data in $L^2(Q_T)\times
\big[H^1(0,T;L^2(\Om))\cap L^2(0,T;H^2(\Om))\big]$.
\end{remark}

\begin{remark}
From estimates \eqref{3.15} and \eqref{3.16} it follows that
\begin{equation}
\sup_{(t,y)\in Q_T} (l(t))^{3-\gamma}\int_{\Om} |k(t,x,y)|\,dx\le K_6=\max\{K_4,K_5\}.
\label{4.8}
\end{equation}

We will use this estimate in the proof of Theorem  \ref{teo-4.1}.
\end{remark}

In the proof of Theorem \ref{teo-4.1} we need the following lemma from \cite{BS}, which we state here
as a lemma.

\begin{lemma}[Theorem 4.9 of \cite{BS}]
\label{lem-4.4} Let $z \in C([0,T])$ and $b, k \in L^1(0,T)$ be
nonnegative functions which satisfy the integral inequality
\beqno
z(\tau) \le a + \int_0^{\tau} b(s)z(s)\,ds + \int_0^{\tau} k(s)(z(s))^{1/2}\,ds,\qq\;\,
\tau\in [0,T],
\eeqno
where $p\in (0,1)$ and $a\ge 0$ are given constants. Then, the following estimate
\begin{align*}
z(\tau) \le \exp \left( \int_0^{\tau} b(s)\,ds\right)\bigg [\sqrt{a}+ \frac{1}{2}\int_0^{\tau} k(s)\exp \left(-\frac{1}{2}\int _{0}^{s}b(\s)\,d\s \right)ds\bigg ]^2
\end{align*}
holds true for any $\tau\in [0,T]$.
\end{lemma}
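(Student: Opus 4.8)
The plan is to reduce the nonlinear integral inequality to a \emph{linear} differential inequality by a square-root substitution. First I would introduce the comparison function
\[
y(\tau):=a+\int_0^\tau b(s)z(s)\,ds+\int_0^\tau k(s)(z(s))^{1/2}\,ds,\qquad \tau\in[0,T],
\]
so that the hypothesis reads $z(\tau)\le y(\tau)$, with $y(0)=a$ and $y$ absolutely continuous (it is $a$ plus integrals of the $L^1$ functions $bz$ and $kz^{1/2}$, since $z$ is bounded on $[0,T]$). Because $z,b,k\ge 0$, the function $y$ is nondecreasing, hence $y(\tau)\ge a$; and for almost every $\tau$ its derivative satisfies
\[
y'(\tau)=b(\tau)z(\tau)+k(\tau)(z(\tau))^{1/2}\le b(\tau)y(\tau)+k(\tau)(y(\tau))^{1/2},
\]
where the last step uses $z\le y$ together with the monotonicity of the square root.

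The key step is the substitution $w:=\sqrt{y}$. Assuming for the moment that $y$ stays strictly positive, $w$ is absolutely continuous with $w'=y'/(2\sqrt{y})$ almost everywhere, and dividing the differential inequality above by $2\sqrt{y}$ removes the nonlinearity:
\[
w'(\tau)\le \tfrac12\, b(\tau)\,w(\tau)+\tfrac12\, k(\tau).
\]
This is now a classical linear differential inequality. Multiplying by the integrating factor $\exp\!\big(-\tfrac12\int_0^\tau b(\sigma)\,d\sigma\big)$ and integrating from $0$ to $\tau$, using $w(0)=\sqrt{a}$, gives
\[
w(\tau)\le \exp\!\Big(\tfrac12\!\int_0^\tau\! b\Big)\Big[\sqrt{a}+\tfrac12\!\int_0^\tau\! k(s)\exp\!\Big(-\tfrac12\!\int_0^s\! b\Big)\,ds\Big].
\]
Squaring and invoking $z\le y=w^2$ then yields exactly the asserted estimate.

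The one point requiring care — and the only genuine obstacle — is the possible vanishing of $y$, which would make the division by $2\sqrt{y}$ illegitimate. When $a>0$ this cannot occur, since $y\ge a>0$ by the monotonicity noted above, so the argument applies verbatim. When $a=0$ I would circumvent the difficulty by a standard regularization: replace $a$ by $\ve>0$ and work with $y_\ve:=\ve+\int_0^\tau bz+\int_0^\tau kz^{1/2}\ge\ve>0$, observing that $z\le y\le y_\ve$ still holds so that the differential inequality $y_\ve'\le b\,y_\ve+k\,y_\ve^{1/2}$ is valid; running the argument gives the bound with $\sqrt{a}$ replaced by $\sqrt{\ve}$, and letting $\ve\to 0^+$ recovers the claim. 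Finally, the exponent $p\in(0,1)$ mentioned in the hypotheses is a harmless leftover from a more general version of the statement: only the value $p=1/2$ that appears explicitly in the inequality is used, and the square-root substitution above is precisely tailored to it.
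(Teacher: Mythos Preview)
Your argument is correct. The substitution $w=\sqrt{y}$ reducing the Bernoulli-type differential inequality $y'\le by+k\sqrt{y}$ to the linear one $w'\le \tfrac12 bw+\tfrac12 k$, followed by the integrating factor, is the standard proof of this Bihari-type estimate; your handling of the case $a=0$ by the $\ve$-shift is also the usual device, and your remark about the spurious parameter $p$ is apt.

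There is nothing to compare against in the paper itself: the authors do not prove this lemma but simply quote it as Theorem~4.9 of Bainov--Simeonov \cite{BS}. Your proof is essentially the one found in that reference (and in most treatments of nonlinear Gronwall inequalities), so no alternative approach is at stake here.
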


\begin{proof}[Proof of Theorem $\ref{teo-4.1}$]
Let us introduce a family of functions $\s_\ve\in
W^{1,\infty}(0,T)$ ($\ve\in (0,T_1/(2T))$) such that
\beqno
\hskip-14truemm
&& 0 \le \s_\ve \le 1, \qq\;\, \s_\ve(t)=0,\ t\in [0,\ve T],
\qq\;\, \s_\ve(t)=1,\ t\in [2\ve T,T].
\eeqno
It is a simple task to show that the function $v_\ve=\sigma_{\ve}v$, where
$v$ is the solution to problem (IP2), solves the following
initial and boundary-value problem:
\beqno
(DP1)\ \left\{ \hskip-2truemm
\bear{lll}
v_\ve\in H^1(0,T;L^2(\Om))\cap L^2(0,T;H^2(\Om)), &
\\[2mm]
D_tv_\ve(t,x)-A(x,D)v_\ve(t,x) &
\\[2mm]
= Bv_\ve(t,x)  + \s'_\ve(t)v(t,x) + \s_\ve(t)\ds\sum_{j\neq 4}{\mathcal B}_jv(t,x)
\\[2mm]
+{\wtil f}_\ve(t,x), & (t,x)\in Q_T,
\\[2mm]
v_\ve(0,x)=0, & x\in \Om,
\\[2mm]
v_\ve(t,x)=0, & (t,x)\in (0,T)\times \partial\Om,
\eear \right.
\eeqno
where
${\wtil f}_\ve=\s_\ve {\wtil f}$ and the operator ${\mathcal B}_j$ ($j=1,2,3,5$) are defined in \eqref{2.12}.
Recall now that $-A(\cdot,D)$ satisfies the following estimate for all
$w\in H^2(\Om)\cap H^1_0(\Om)$:
\begin{align*}
-\int_{\Omega} w A(\cdot,D)w dx = & -\int_{\Omega}\sum_{i,j=1}^nD_{x_i}(a_{i,j}D_{x_j}w)w\,dx\\
&+\int_{\Omega} \sum_{j=1}^na_j w D_{x_j}w dx+\int_{\Omega}a_0w^2\,dx\\
=& \int_{\Omega}\sum_{i,j=1}^na_{i,j}D_{x_i}wD_{x_j}w\,dx+\int_{\Omega}\sum_{j=1}^na_j w D_{x_j}w\,dx
+\int_{\Omega}a_0w^2\,dx\\
\ge &\mu_0\|\nabla_xw\|_{L^2(\Omega)}^2-\bigg (\sum_{j=1}^n\|a_j\|_{\infty}^2\bigg )^{1/2}\int_{\Omega}|\nabla_xw||w|\,dx\\
&-\|a_0\|_{\infty}\|w\|_{L^2(\Omega)}^2\\
\ge & \mu_0\|\nabla_xw\|_{L^2(\Omega)}^2-\|a_0\|_{\infty}\|w\|_{L^2(\Omega)}^2\\
&-\frac{1}{2}\bigg (\sum_{j=1}^n\|a_j\|_{\infty}^2\bigg )^{1/2}(\ve\|\nabla_xw\|_{L^2(\Omega)}^2+\ve^{-1}\|w\|_{L^2(\Omega)}^2),
\end{align*}
where $\mu_0$ is the ellipticity constant in Hypothesis \ref{hyp1}(iii). Hence, choosing $\ve$ properly, we conclude that
\begin{eqnarray*}
-\int_{\Omega} w A(\cdot,D)w\, dx\ge \frac{\mu_0}{2}\|\nabla_xw\|_{L^2(\Omega)}^2-\mu_1\|w\|_{L^2(\Omega)}^2,
\end{eqnarray*}
for some positive constant $\mu_1$. Fix $\tau\in[\ve T,T)$. Multiplying both sides of the differential equation in $(DP1)$ by $v_{\ve}$,
integrating in $[\ve T,\tau]\times\Omega$ and taking the previous estimate into account, we get
%
\begin{align}
&\|v_\ve(\tau,\cdot)\|^2_{L^2(\Om)}
+\mu_0\int_{\ve T}^{\tau}\|\nabla_xv_\ve(t,\cdot)\|^2_{L^2(\Om)}dt
- 2\mu_1\int_{\ve T}^{\tau}\|v_\ve(t,\cdot)\|^2_{L^2(\Om)}dt\notag\\[1mm]
\le & 2\int_{\ve T}^{\tau}(Bv_\ve(t,\cdot),v_\ve(t,\cdot))_{L^2(\Om)}dt
+2\int_{\ve T}^T\s'_\ve(t)\|v_\ve(t,\cdot)\|_{L^2(\Om)}\|v(t,\cdot)\|_{L^2(\Om)}dt\notag\\
&+2\sum_{j\neq 4}\int_{\ve T}^{\tau}\sigma_{\ve}(t)\|v_{\ve}(t,\cdot)\|_{L^2(\Omega)}\|{\mathcal B}_jv(t,\cdot)\|_{L^2(\Omega)}dt\no\\
&+2\int_{\ve T}^T\|v_\ve(t,\cdot)\|_{L^2(\Om)}\|{\wtil f}_\ve(t,\cdot)\|_{L^2(\Om)}dt.
\label{4.7-b}
\end{align}

Let us estimate the terms in the right-hand side of \eqref{4.7-b}. The last one is straightforward to estimate using H\"older inequality. Hence, we focus our attention on the other terms.

According to \eqref{3.0d}, \eqref{4.8} and Holmgren's
inequality (cf., e.g., \cite[Chapter 16, Theorem 3]{LA}), we can estimate
\beqn
\label{4.9}
\|Bv_\ve(t,\cdot)\|_{L^2(\Om)} \le \sqrt{K_3K_6}(l(t))^{\gamma-3}
\|v_\ve(t,\cdot)\|_{L^2(\Om)},\qq\;\, t\in (0,T),
\eeqn
and, consequently,
\begin{align}
\int_{\ve T}^{\tau}(Bv_\ve(t,\cdot),v_\ve(t,\cdot))_{L^2(\Om)}dt
\le \sqrt{K_3K_6}\int_{0}^{\tau}\chi_{(\ve T,T)}(t)(l(t))^{\gamma-3}\|v_\ve(t,\cdot)\|_{L^2(\Om)}^2\,dt.
\label{Bve}
\end{align}

Further, using the inclusion ${\rm supp}\,\s'_\ve \sub [\ve T,2\ve T]$,
we obtain the inequality
\begin{align}
\int_{\ve T}^{\tau} |\s'_\ve(t)|^2\|v(t,\cdot)\|_{L^2(\Om)}\|v_{\ve}(t,\cdot)\|_{L^2(\Om)}\,dt
\le &\int_{\ve T}^{T} |\s'_\ve(t)|^2\|v(t,\cdot)\|_{L^2(\Om)}^2\,dt\notag\\
\le &\|\s'_\ve\|^2_{L^\infty(0,T)}\int_{\ve T}^{2\ve T} \Vert v(t,\cdot)\Vert^2_{L^2(\Om)}\,dt.
\label{v-1}
\end{align}
Now, we estimate terms in \eqref{4.7-b} containing the operators
${\wtil {\mathcal B}}_jv$, $j=1,2,3,5$.
Using the inclusion ${\rm supp}\,\s_\ve \sub [\ve T,T]$,
we have the inequalities
\begin{align}
&2\int_0^\tau |\s_\ve(t)|\|v_\ve(t,\cdot)\|_{L^2(\Om)}\|
{\mathcal B}_jv(t,\cdot)\|_{L^2(\Om)}\,dt
\no\\
\le &\int_0^\tau \|v_\ve(t,\cdot)\|^2_{L^2(\Om)}\,dt
+\int_0^\tau |\s_\ve(t)|^2\|{\mathcal B}_jv(t,\cdot)\|^2
_{L^2(\Om)}\,dt\no\\
\le &\int_0^\tau \|v_\ve(t,\cdot)\|^2_{L^2(\Om)}\,dt
+ \int_{\ve T}^{T} \|{\mathcal B}_jv(t,\cdot)\|^2_{L^2(\Om)}\,dt.
\label{4.10}
\end{align}
 From the definition of ${\mathcal B}_j$, $j=1,2$, Lemma \ref{lem-3.2}
with $s_0=0$ and $\ve=1$, we deduce
\begin{align}
\int_{\ve T}^{T} \|{\mathcal B}_jv(t,\cdot)\|^2_{L^2(\Om)}\,dt
= &\int_{\ve T}^{T} \,dt\int_{\Om} |f_j(t,x)|^2|v(T_j,x)|^2\,dx\no\\
\le &\|f_j\|^2_{L^2(0,T;L^\infty(\Om))}\int_{\Omega}|v(T_j,x)|^2\,dx\no\\
\le &\|f_j\|^2_{L^2(0,T;L^\infty(\Om))}\int_{Q_{T_1,T_2}} |D_tv(t,x)|^2\,dtdx\no\\
&+ \|f_j\|^2_{L^2(0,T;L^\infty(\Om))}\int_{Q_{T_1,T_2}}
[(T_2-T_1)^{-1}+1]|v(t,x)|^2\,dtdx.
\label{4.10-a}
\end{align}
Likewise we can estimate
\begin{align}
\int_{\ve T}^{T} \|{\mathcal B}_3v(t,\cdot)\|^2_{L^2(\Om)}\,dt
= &\int_{\ve T}^{T} \,dt\int_{\Om} |f_3(t,x)|^2\bigg|\int_{T_1}^{T_2}
\rho_1(\s,x)v(\s,x)\,d\s\bigg|^2\,dx\no\\
\le &\|f_3\|_{L^2(0,T;L^{\infty}(\Omega))}^2\no\\
&\qq\times\int_{\Om}\bigg (\int_{T_1}^{T_2}|\rho_1(\s,x)|^2d\s\bigg )\bigg (\int_{T_1}^{T_2}|v(\s,x)|^2\,d\s\bigg )\,dx
\no\\
\le &\|f_3\|_{L^2(0,T;L^{\infty}(\Omega))}^2\|\rho_1\|^2_{L^2(0,T;L^{\infty}(\Omega))}
\int_{T_1}^{T_2} \|v(\s,\cdot)\|^2_{L^2(\Om)}\,d\s
\label{4.10-b}
\end{align}
and, taking \eqref{4.9} (with $v$ replacing $v_{\ve}$) into account,
\begin{align}
\int_{\ve T}^{T} \|{\mathcal B}_5v(t,\cdot)\|^2_{L^2(\Om)}\,dt
= &\int_{\ve T}^{T}dt \int_\Om |f_4(t,x)|^2\Big|\int_{T_1}^{T_2}
\rho_2(\s,x)Bv(\s,x)\,d\s\Big|^2\,dx\no\\
\le &\|\rho_2\|^2_{L^2(T_1,T_2;L^{\infty}(\Omega))}\|f_4\|_{L^2(0,T;L^{\infty}(\Omega))}^2\int_{T_1}^{T_2}\|Bv(\s,\cdot)\|_{L^2(\Omega)}^2d\s\no\\
\le &\|\rho_2\|^2_{L^2(T_1,T_2;L^{\infty}(\Omega))}\|f_4\|_{L^2(0,T;L^{\infty}(\Omega))}^2\no\\
&\qquad\quad\times
K_3K_6\int_{T_1}^{T_2} (l(\s))^{2\gamma-3}(l(\s))^{-3}\|v(\s,\cdot)\|^2_{L^2(\Om)}\,d\s
\no\\
\le &K_3K_6\max\{M_{T_1,T_2}^{2\gamma-3},2^{6-4\gamma}T^{4\gamma-6}\}\|\rho_2\|^2_{L^2(T_1,T_2;L^{\infty}(\Omega))}\notag\\
&\qquad\quad\times \|f_4\|_{L^2(0,T;L^{\infty}(\Omega))}^2\int_{T_1}^{T_2}(l(\s))^{-3}\|v(\s,\cdot)\|^2_{L^2(\Om)}
\,d\s,
\label{4.14}
\end{align}
where we also used the estimate $\|l\|_{\infty}\le T^2/4$.
Therefore, from \eqref{4.7-b} and \eqref{Bve}-\eqref{4.14}
we get the integral inequality:
\begin{align}
&\|v_\ve(\tau,\cdot)\|^2_{L^2(\Om)} + \mu_0\int_0^\tau
\|\nabla_xv_\ve(t,\cdot)\|^2_{L^2(\Om)}\,dt\notag\\
\le &\int_0^\tau b_{\ve}(t)(t)\|v_\ve(t,\cdot)\|^2_{L^2(\Om)}\,dt
+\int_0^\tau \|{\wtil f}_\ve(t,\cdot)\|_{L^2(\Om)}\|v_\ve(t,\cdot)\|_{L^2(\Om)}\,dt
\no\\
& + J_1(f_1,f_2)\int_{T_1}^{T_2} \|D_tv(t,\cdot)\|^2_{L^2(\Om)}\,dt
+ J_2(f_1,f_2,f_3,\rho_1)\int_{T_1}^{T_2} \|v(t,\cdot)\|^2_{L^2(\Om)}\,dt
\no\\
& + J_3(f_4,\rho_2)\int_{T_1}^{T_2} (l(t))^{-3}\|v(t,\cdot)\|^2_{L^2(\Om)}\,dt
+ 2\|\s'_\ve\|^2_{L^\infty(0,T)}\int_{\ve T}^{2\ve T} \|v(t,\cdot)\|_{L^2(\Om)}
^2\,dt,
\label{4.15}
\end{align}
where we have set
\begin{align*}
&b_{\ve}(t)= 2\big[\mu_1 + 1+\sqrt{K_3K_6}\chi_{(\ve T,T)}(t)(l(t))^{3-\gamma}\big],\qq\;\,  t \in (0,T),\no\\[2mm]
& J_1(f_1,f_2)=\sum_{j=1}^2\, \|f_j\|^2_{L^2(0,T;L^\infty(\Om))},\no\\[1mm]
&J_2(f_1,f_2,f_3,\rho_1)=[(T_2-T_1)^{-1}+1]\sum_{j=1}^2\, \|f_j\|^2
_{L^2(0,T;L^\infty(\Om))}\no\\
&\phantom{J_2(f_1,f_2,f_3,\rho_1)=}
+ \|\rho_1\|^2_{L^2(0,T;L^{\infty}(\Omega))}\|f_3\|^2_{L^2(0,T;L^\infty(\Om))},
\no\\[3mm]
&J_3(f_4,\rho_2)=K_3K_6\max\{M_{T_1,T_2}^{2\gamma-3},2^{6-4\gamma}T^{4\gamma-6}\}\|\rho_2\|^2_{L^2(T_1,T_2;L^{\infty}(\Omega))}
\|f_4\|^2_{L^2(0,T;L^\infty(\Om))}.
\end{align*}
Since $\ve \in (0,T_1/(2T))$, it follows that $2\ve T<T_1$ and \eqref{4.15} implies the integral inequality
\begin{align*}
& \|v_\ve(\tau,\cdot)\|^2_{L^2(\Om)} + \mu_0\int_0^\tau
\|\nabla_xv_\ve(t,\cdot)\|^2_{L^2(\Om)}\,dt\\
\le &\int_0^\tau b_{\ve}(t)\|v_\ve(t,\cdot)\|^2_{L^2(\Om)}\,dt
+\int_0^\tau \|{\wtil f}_\ve(t,\cdot)\|_{L^2(\Om)}\|v_\ve(t,\cdot)\|
_{L^2(\Om)}\,dt\\
&+ J_1(f_1,f_2)\int_{\ve T}^{T_2} \|D_tv(t,\cdot)\|^2_{L^2(\Om)}\,dt
+ J_3(f_4,\rho_2)\int_{\ve T}^{T_2} (l(t))^{-3}\|v(t,\cdot)\|^2_{L^2(\Om)}\,dt\\
&+ J_4(\ve,f_1,f_2,f_3,\rho_1)\int_{\ve T}^{T_2} \|v(t,\cdot)\|^2_{L^2(\Om)}\,dt,
\end{align*}
where
\begin{eqnarray*}
J_4(\ve,f_1,f_2,f_3,\rho_1) = J_2(f_1,f_2,f_3,\rho_1) + 2\|\s'_\ve\|^2_{L^\infty(0,T)}.
\end{eqnarray*}
Now, from \eqref{alpha} we deduce the inequalities
\begin{align*}
& (l(t))^j\exp[2s_0\a_\l(t,x)]\ge \Big[\min_{t\in [\ve T,T_2]}l(t)\Big]^j
\exp\Big\{-2s_0c_{1,\lambda}(\psi) \Big[\min_{t\in [\ve T,T_2]}l(t)\Big]^{-1}\Big\}
\\
& \phantom{(l(t))^j\exp[2s_0\a_\l(t,x)]} =: C_{2+j}(\ve,T_2,T),
\\[2mm]
& (l(t))^{-3}\exp[2s_0\a_\l(t,x)]\ge 2^6T^{-6}\exp\Big\{-2s_0c_{1,\lambda}(\psi)
\Big[\min_{t\in [\ve T,T_2]}l(t)\Big]^{-1}\Big\}\\
&\phantom{(l(t))^{-3}\exp[2s_0\a_\l(t,x)]}=: C_4(\ve,T_2,T),
\end{align*}
for all $t\in [\ve T,2\ve T]$ and $j=0,1$.
Hence, from the Carleman type estimate \eqref{3.25}, we obtain
\begin{align}
& \int_{\ve T}^{T_2} \|D_t^jv(t,\cdot)\|_{L^2(\Om)}^2\,dt\no\\
\le & (C_{4-j}(\ve,T_2,T))^{-1}\int_{\ve T}^{T_2}(l(t))^{-3+4j}
\exp[2s_0\a_\l(t,x)]\|D_t^jv(t,\cdot)\|^2_{L^2(\Om)}\,dt\no\\
\le & 12C_1(C_{4-j}(\ve,T_2,T))^{-1}s_0^{4j-3}e^{\lambda j\|\psi\|_{\infty}}\int_{Q_T} |{\wtil f}(t,x)|^2\exp{[2s_0\a_\l(t,x)]}\,dtdx\no\\
\le & 12C_1(C_{4-j}(\ve,T_2,T))^{-1}s_0^{4j-3}e^{\lambda j\|\psi\|_{\infty}}\|{\wtil f}\|^2_{L^2(Q_T)},
\label{4.21}
\end{align}
for $j=0,1$ and
\begin{align}
&\int_{\ve T}^{T_2}(l(t))^{-3}\|v(t,\cdot)\|_{L^2(\Om)}^2\,dt
\no\\
\le & (C_2(\ve,T_2,T))^{-1}\int_{\ve T}^{T_2}(l(t))^{-3}\exp[2s_0\a_\l(t,x)]
\|v(t,\cdot)\|_{L^2(\Om)}^2\,dt
\no\\
\le & 12C_1(C_2(\ve,T_2,T))^{-1}s_0^{-3}\|{\wtil f}\|^2_{L^2(Q_T)}.
\label{4.22}
\end{align}
Finally, from \eqref{4.14}, \eqref{4.21} and \eqref{4.22} we deduce the fundamental
integro-differential inequality
\begin{align*}
z_{\ve}(\tau):=&\|v_\ve(\tau,\cdot)\|^2_{L^2(\Om)} +\mu_0\int_0^\tau
\|\nabla_xv_\ve(t,\cdot)\|^2_{L^2(\Om)}\,dt\no\\
\le & \int_0^\tau b_{\ve}(t)z_\ve(t)\,dt
+\int_0^\tau \|{\wtil f}(t,\cdot)\|_{L^2(\Om)}\chi_{(\ve T,T)}(t)(z_\ve(t))^{1/2}\,dt\no\\
&+ J_5(\ve,f_1,f_2,f_3,f_4,\rho_1,\rho_2,\lambda,\psi)\|{\wtil f}\|^2_{L^2(Q_T)},
\end{align*}
for any $\tau \in (\ve T,T)$ (and, hence, for any $\tau\in [0,T]$ since $v_{\ve}(t,\cdot)=0$ for any $t\in [0,\ve T]$), where
\begin{align*}
J_5(\ve,f_1, f_2, f_3, f_4,\rho_1,\rho_2,\lambda,\psi)
=&12C_1(C_3(\ve,T_2,T))^{-1}J_1(f_1,f_2)s_0e^{\lambda\|\psi\|_{\infty}}\\
&+ 12C_1(C_2(\ve,T_2,T))^{-1}J_3(f_4,\rho_2)s_0^{-3}\no\\
&+12C_1(C_4(\ve,T_2,T))^{-1}J_4(\ve,f_1,f_2,f_3,\rho_1)s_0^{-3}.
\end{align*}

From Lemma \ref{lem-4.4} we deduce the fundamental estimate
\begin{align*}
&\|v_\ve(\tau,\cdot)\|^2_{L^2(\Om)}+\mu_0\int_0^\tau
\|\nabla_xv_\ve(t,\cdot)\|^2_{L^2(\Om)}\,dt
\no\\
\le &\bigg [J_5(\ve,f_1,f_2,f_3,f_4,\rho_1,\rho_2,\lambda,\psi)^{1/2}\|{\wtil f}\|_{L^2(Q_T)}
\exp\left (\frac{1}{2}\int_0^\tau b_{\ve}(r)\,dr\right )\no\\
&\q+ \int_0^\tau \exp\bigg (\frac{1}{2}\int_t^\tau b_{\ve}(r)\,dr\bigg )\chi_{(\ve T,T)}(t)\|\wtil f(t,\cdot)\|_{L^2(\Omega)}\,dt\bigg ]^2,
\end{align*}
for any $\tau\in [0,T]$.
In particular, for all $\tau\in [2\ve T,T]$, we easily find the desired
estimate for $u=v+g$:
\begin{align}
&\|u(\tau,\cdot)\|^2_{L^2(\Om)}+\mu_0\int_{2\ve T}^\tau
\|\nabla_xu(t,\cdot)\|^2_{L^2(\Om)}\,dt\no\\
\le &2\|g(\tau,\cdot)\|^2_{L^2(\Om)}+ 2\mu_0\int_0^\tau
\|\nabla_xg(t,\cdot)\|^2_{L^2(\Om)}\,dt
\no\\
&+ 2\bigg [J_5(\ve,f_1,f_2,f_3,f_4,\rho_1,\rho_2,\lambda,\psi)^{1/2}\exp\left (\frac{1}{2}\|b_{\ve}\|_{L^1(0,T)}\right )\|{\wtil f}\|_{L^2(Q_T)}\no\\
& \phantom{\le\qquad}
+\exp\left (\frac{1}{2}\|b_{\ve}\|_{L^1(0,T)}\right )\int_0^T\chi_{(\ve T,T)}(t)
\|\wtil f(t,\cdot)\|_{L^2(\Omega)}\,dt\bigg ]^2,
\label{4.25}
\end{align}
for any $\ve \in (0,T_1/(2T))$.
Finally, observe that from \eqref{2.17} and \eqref{4.9} we easily deduce
the estimate
\begin{align*}
\|{\wtil f}(t,\cdot)\|_{L^2(\Om)}\le &\|f_0(t,\cdot)\|_{L^2(\Om)}+ \|D_tg(t,\cdot)\|_{L^2(\Om)}
+ \|A(\cdot,D)g(t,\cdot)\|_{L^2(\Om)}\\
&+\|{\mathcal B}g(t,\cdot)\|_{L^2(\Omega)}\\
\le & \|f_0(t,\cdot)\|_{L^2(\Om)}+ \|D_tg(t,\cdot)\|_{L^2(\Om)}
+ \|A(\cdot,D)g(t,\cdot)\|_{L^2(\Om)}\\
&+\sum_{j=1}^2\|f_j(t,\cdot)\|_{L^{\infty}(\Omega)}\|g(T_j,\cdot)\|_{L^2(\Omega)}\\
&+\|f_3(t,\cdot)\|_{L^{\infty}(\Omega)}\|\rho_1\|_{L^2(T_1,T_2,L^{\infty}(\Omega))}\|g\|_{L^2(Q_{T_1,T_2})}\\
&+ \sqrt{K_3K_6}(l(t))^{\gamma-3}\|g(t,\cdot)\|_{L^2(\Om)}\\
&+\sqrt{K_3K_6}M_{T_1,T_2}^{\gamma-3}\|f_4(t,\cdot)\|_{L^{\infty}(\Omega)}\|\rho_2\|_{L^2(T_1,T_2,L^{\infty}(\Omega))}\|g\|_{L^2(Q_T)},
\end{align*}
where, as usual, $M_{T_1,T_2}=\inf_{t\in [T_1,T_2]}l(t)$. Hence,
\begin{align*}
&\int_0^T\chi_{(\ve T,T)}(t)\|\wtil f(t,\cdot)\|_{L^2(\Omega)}dt\\
\le & \sqrt{T}\bigg (\|f_0\|_{L^2(Q_T)}\!+\!\|D_tg\|_{L^2(Q_T)}\!+\!\|A(\cdot,D)g\|_{L^2(Q_T)}\!+\!\sum_{j=1}^2\|f_j\|_{L^2(Q_T)}\|g(T_j,\cdot)\|_{L^2(\Omega)}\\
&\phantom{\sqrt{T}\bigg (}+\|f_3\|_{L^2(0,T;L^{\infty}(\Omega))}\|\rho_1\|_{L^2(T_1,T_2,L^{\infty}(\Omega))}\|g\|_{L^2(Q_{T_1,T_2})}\\
&\phantom{\sqrt{T}\bigg (}+\sqrt{K_3K_6}M_{T_1,T_2}^{\gamma-3}\|f_4\|_{L^2(0,T;L^{\infty}(\Omega))}\|\rho_2\|_{L^2(T_1,T_2,L^{\infty}(\Omega))}\|g\|_{L^2(Q_T)}\bigg )\\
&\phantom{\sqrt{T}\bigg (}+\sqrt{K_3K_6}\bigg (\int_{\ve T}^T(l(t))^{2\gamma-6}dt\bigg )^{1/2}\|g\|_{L^2(Q_T)}.
\end{align*}
Replacing this estimate in \eqref{4.25}, the assertion follows at once.
\end{proof}


\med
\pn
{\bf Acknowledgements.} The second author is a member of G.N.A.M.P.A. of
the Italian Istituto Nazionale di Alta Matematica.
The third author is partially supported by Grant-in-Aid for Scientific
Research (S) 15H05740 of Japan Society for the Promotion of Science.

\end{document}